\definecolor{gr}{rgb}   {0.,   0.69,   0.23 }
\definecolor{bl}{rgb}   {0.,   0.5,   1. }
\definecolor{mg}{rgb}   {0.85,  0.,    0.85}
\definecolor{yl}{rgb}   {0.8,  0.7,   0.}
\definecolor{or}{rgb}  {0.7,0.2,0.2}
\tikzset{
	ddot/.style={circle,fill=white,draw=black,inner sep=0pt,minimum size=0.8mm},
	>=stealth,
	}
\tikzset{
	ddot2/.style={circle,fill=black,draw=black,inner sep=0pt,minimum size=0.8mm},
	>=stealth,
	}
\newtheorem{theorem}{Theorem} [section]
\newtheorem{lemma}[theorem]{Lemma}
\newtheorem{proposition}[theorem]{Proposition}
\newtheorem{remark}[theorem]{Remark}
\newcommand{\I}{\mathcal{I}}
\newcommand{\noi}{\noindent}
\newcommand{\Z}{\mathbb{Z}}
\newcommand{\R}{\mathbb{R}}
\newcommand{\C}{\mathbb{C}}
\newcommand{\T}{\mathbb{T}}
\newcommand{\E}{\mathbb{E}}
\newcommand{\F}{\mathcal{F}}
\newcommand{\al}{\alpha}
\newcommand{\be}{\beta}
\newcommand{\dl}{\delta}
\newcommand{\nb}{\nabla}
\newcommand{\Dl}{\Delta}
\newcommand{\eps}{\varepsilon}
\newcommand{\g}{\gamma}
\newcommand{\G}{\Gamma}
\newcommand{\ld}{\lambda}
\newcommand{\s}{\sigma}
\newcommand{\ft}{\widehat}
\newcommand{\wt}{\widetilde}
\newcommand{\cj}{\overline}
\newcommand{\dt}{\partial_t}
\newcommand{\embeds}{\hookrightarrow}
\newcommand{\ta}{\theta}
\renewcommand{\l}{\ell}
\renewcommand{\o}{\omega}
\renewcommand{\O}{\Omega}
\newcommand{\les}{\lesssim}
\newcommand{\jb}[1]
{\langle #1 \rangle}
\newcommand{\ind}{\mathbf 1}
\newcommand{\NN}{\mathcal{N}}
\newcommand{\N}{\mathbb{N}}
\renewcommand{\H}{\mathcal{H}}
\newcommand{\HS}{\textit{HS}}
\newtheorem*{ackno}{Acknowledgements}
\newcommand{\too}{\longrightarrow}
\newcommand{\Id}{\textup{Id}}
\numberwithin{equation}{section}
\numberwithin{theorem}{section}
\begin{document}
\baselineskip = 13.5pt

\title[GWP of the energy-critical SNLW]
{Global well-posedness of the energy-critical  \\ stochastic nonlinear wave equations}

\author[E.~Brun, G.~Li, and R.~Liu]
{Enguerrand Brun, Guopeng Li, and Ruoyuan Liu}

\address{
Enguerrand Brun,  Ecole Normale Sup\'erieure de Lyon\\
46, all\'ee d'Italie\\
69364-Lyon Cedex 07\\
France
}
\email{enguerrand.brun@ens-lyon.fr}

\address{
Guopeng Li, School of Mathematics\\
The University of Edinburgh\\
and The Maxwell Institute for the Mathematical Sciences\\
James Clerk Maxwell Building\\
The King's Buildings\\
Peter Guthrie Tait Road\\
Edinburgh\\ 
EH9 3FD\\
 United Kingdom
}
\email{guopeng.li@ed.ac.uk}

\address{
Ruoyuan Liu,  School of Mathematics\\
The University of Edinburgh\\
and The Maxwell Institute for the Mathematical Sciences\\
James Clerk Maxwell Building\\
The King's Buildings\\
Peter Guthrie Tait Road\\
Edinburgh\\ 
EH9 3FD\\
 United Kingdom}

\email{ruoyuan.liu@ed.ac.uk}


\subjclass[date]{35L71, 35R60, 60H15}
\keywords{stochastic nonlinear wave equation; global well-posedness; energy-critical; perturbation theory}
%


\begin{abstract}
We consider the Cauchy problem for the defocusing energy-critical stochastic nonlinear wave equations (SNLW) with an additive stochastic forcing on $\R^{d}$ and $\T^{d}$ with $d \geq 3$.
By adapting the probabilistic perturbation argument 
employed 
in the context of the random data Cauchy theory
 by B\'enyi-Oh-Pocovnicu (2015) and Pocovnicu
(2017)  and in the context of stochastic PDEs by Oh-Okamoto (2020),  
we prove global well-posedness of the defocusing energy-critical SNLW.
In particular, on $\T^d$, we prove global well-posedness with the stochastic forcing below the energy space.

\end{abstract}

\maketitle

\tableofcontents

\section{Introduction}
\label{SEC:intro}

We consider the following Cauchy problem for the defocusing energy-critical stochastic nonlinear wave equation (SNLW) on $\mathcal{M} = \R^{d}$ or $\mathbb{T}^{d}$ (with $\T = \R / 2 \pi \Z$) for $d \geq 3$:
\begin{equation}
\begin{cases}
\dt^{2} u - \Dl u + |u|^{\frac{4}{d - 2}}u = \phi \xi \\
(u,\dt u)|_{t = 0} = (u_{0},u_{1}),
\end{cases}
\label{SNLW}
\end{equation}
where $u$ is real-valued, $\xi$ is the space-time white noise on $\R_{+}\times \mathcal{M}$, and $\phi$ is a bounded operator on $L^{2} (\mathcal{M})$. 
The aim of this paper is to show global well-posedness of \eqref{SNLW}.


Let us first mention some backgrounds on the energy-critical NLW. Consider the following deterministic defocusing NLW on $\R^d$ with $d \geq 3$:
\begin{align} 
\dt^{2} u - \Dl u + |u|^{\frac{4}{d - 2}}u=0.
\label{NLWec}
\end{align}

\noi
It is well known that the following dilation symmetry for $\ld > 0$
\begin{align*}
u (t, x) \mapsto u_\ld (t, x) := \ld^{\frac{d - 2}{2}} u (\ld t, \ld x)
\end{align*}

\noi
maps solutions of NLW \eqref{NLWec} to solutions of NLW \eqref{NLWec}. A direct computation yields
\begin{align*}
\| u \|_{\dot{H}^1 (\R^d)} = \| u_\ld \|_{\dot{H}^1 (\R^d)},
\end{align*}

\noi
so that the scaling critical Sobolev regularity for NLW \eqref{NLWec} is $s_c = 1$.
Also, the energy defined by
\begin{align}
E(\vec u) = E (u, \dt u) := \frac 12 \int (\dt u)^{2} dx + \frac 12 \int |\nabla u|^{2} dx + \frac{d - 2}{2d} \int |u|^{\frac{2d}{d-2}} dx
\label{energy}
\end{align}

\noi
is conserved under the flow of \eqref{NLWec}. In view of the Sobolev embedding $\dot{H}^1 (\R^d) \embeds L^{\frac{2d}{d - 2}} (\R^d)$, we see that $E (\vec u) < \infty$ if and only if $\vec u \in \dot{\H}^1 (\R^d) := \dot{H}^1 (\R^d) \times L^2 (\R^d)$. For this reason, we refer to $\dot{\H}^1 (\R^d)$ as the energy space for NLW \eqref{NLWec}. Moreover, we say that NLW \eqref{NLWec} is {\it energy-critical}.  On $\T^d$, although there is no dilation symmetry, the heuristics provided by the scaling analysis still hold and we say that \eqref{NLWec} on $\T^d$ is energy-critical.

Note that for energy-subcritical NLW, after proving local well-posedness, one can easily obtain global well-posedness by using the conservation of the energy, which provides an a priori control of the $\dot{\H}^1$-norm of the solution. For the energy-critical NLW, however, there is a delicate balance between the linear and nonlinear parts of the equation. In the energy-critical setting, only the energy conservation itself is not enough to obtain global well-posedness, which makes the problem quite intricate. Still, after substantial efforts of many mathematicians, we now know that the energy-critical defocusing NLW \eqref{NLWec} is globally well-posed in $\dot{\H}^1 (\R^d)$ and all solutions in the energy space scatter. See \cite{Str, Gri90, Gri92, SS93, SS94, Kap, BS, BG, Nak1, Nak2, Tao06}. On the other hand, in the periodic setting, the global well-posedness results for \eqref{NLWec} in $\dot{\H}^1 (\R^d)$ immediately implies corresponding global well-posedness for \eqref{NLWec} in $\H^1 (\T^d)$, thanks to the finite speed of propagation. We also point out that these well-posedness results for \eqref{NLWec} in the energy space are sharp in the sense that ill-posedness for \eqref{NLWec} on $\R^d$ occurs below the energy space; see \cite{CCT, FO, OOTz}.

\medskip
Let us now go back to the defocusing energy-critical NLW with a stochastic forcing \eqref{SNLW}. Well-posedness theory of SNLW has been studied extensively; see \cite{PZ00, Pes02, On04, Chow06, On07, DKMNX, DS, On10-1, On10-2, BOS16, BR22} and the references therein. More recently, there has been a significant development in the well-posedness theory of singular SNLW with an additive stochastic forcing; see \cite{GKO, GKO2, GKOT, OWZ, ORTz, OOT1, OOT2, OTWZ}.

Our main goal in this paper is to prove global well-posedness of the defocusing energy-critical SNLW \eqref{SNLW}. We say that $u$ is a solution to \eqref{SNLW} if it satisfies the following Duhamel formulation (or mild formulation):
\begin{align}
u(t) = V(t) (u_0, u_1) - \int_{0}^{t} S(t-t') \big( |u|^{\frac{4}{d - 2}} u \big) (t')  dt' + \int_0^t S(t - t') \phi \xi (d t'), 
\label{Duh}
\end{align}

\noi
where
\begin{align}
S(t) = \frac{\sin (t |\nabla|)}{|\nabla|} \quad \text{and} \quad V(t) (\phi_0, \phi_1) = \dt S(t) \phi_0 + S(t) \phi_1.
\label{defS}
\end{align}

\noi
The last term on the right-hand-side of \eqref{Duh} is called the stochastic convolution, which we denote by
\begin{align}
\Psi (t) = \int_0^t S(t - t') \phi \xi (d t').
\label{Psi}
\end{align}

\noi
See \eqref{Psi2} in Subsection \ref{SUB:LEM_Rd} for a precise definition.

On $\R^d$, we obtain the following global well-posedness result.

\begin{theorem}
\label{THM:R}
Let $d \geq 3$ and $\phi \in \HS (L^{2} (\R^d),L^{2} (\R^d))$. Then, the defocusing energy-critical SNLW \eqref{SNLW} is globally well-posed in $\dot{\H}^1 (\R^d)$ in the sense that the following statement holds true almost surely; given $(u_0, u_1) \in \dot{\H}^1 (\R^d)$, there exists a global-in-time solution $u$ to \eqref{SNLW} with $(u, \dt u)|_{t = 0} = (u_0, u_1)$.
\end{theorem}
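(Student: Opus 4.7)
The plan is to use the Da Prato--Debussche first-order expansion together with the probabilistic perturbation scheme referenced in the introduction. Write $u = v + \Psi$, with $\Psi$ the stochastic convolution defined in \eqref{Psi}. An It\^o-isometry computation using $\phi \in \HS(L^2,L^2)$ shows that $\Psi$ belongs almost surely to $C([0,T];\dot H^{s}(\R^d))$ for every $s<\frac{1}{2}$, as well as to every wave-Strichartz space at regularity $s<\frac{1}{2}$, on each bounded time interval $[0,T]$; in particular, $\Psi$ is generically below the energy space. The residual $v$ satisfies the forced deterministic NLW
\begin{equation}
\dt^2 v - \Dl v + |v+\Psi|^{\frac{4}{d-2}}(v+\Psi) = 0,\qquad (v,\dt v)|_{t=0}=(u_0,u_1)\in\dot{\H}^1(\R^d),
\label{eq:vres}
\end{equation}
which I would recast as $\dt^2 v - \Dl v + F(v) = F(v) - F(v+\Psi)$ with $F(u)=|u|^{\frac{4}{d-2}}u$, i.e.~the deterministic energy-critical NLW \eqref{NLWec} with a source involving only $v$ and $\Psi$. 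Local well-posedness for \eqref{eq:vres} in $\dot{\H}^1(\R^d)$ then follows by a standard Strichartz contraction, using that each factor of $\Psi$ appearing in the expansion of $F(v+\Psi)-F(v)$ is paid for by the integrability of $\Psi$ together with Sobolev embedding.

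The key analytic step is to produce an a priori bound on the energy $E(\vec v(t))$ of $v$ over any finite time interval $[0,T]$. I would differentiate \eqref{energy} along \eqref{eq:vres} to get
\begin{equation*}
\frac{d}{dt} E(\vec v) = -\int_{\R^d} \dt v \,\bigl[\,|v+\Psi|^{\frac{4}{d-2}}(v+\Psi) - |v|^{\frac{4}{d-2}}v \,\bigr]\,dx,
\end{equation*}
and bound the right-hand side by H\"older and Sobolev inequalities in terms of a sub-unit power $E(\vec v)^{\theta}$ and integrable space-time norms of $\Psi$; a Gronwall argument then yields an almost-sure bound on $E(\vec v(\cdot))$ on $[0,T]$ in terms of the initial energy and pathwise norms of $\Psi$.

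To upgrade the local solution to a global one I would invoke the probabilistic perturbation argument of B\'enyi--Oh--Pocovnicu, Pocovnicu, and Oh--Okamoto. Fixing $T>0$, the pathwise Strichartz integrability of $\Psi$ allows a partition of $[0,T]$ into finitely many subintervals $I_j = [t_j,t_{j+1}]$ on each of which $\Psi$ is smaller than a prescribed threshold $\eta$. On $I_j$, I compare $v$ to the solution $\widetilde v$ of the unperturbed deterministic NLW \eqref{NLWec} with data $(v(t_j),\dt v(t_j))$; the global well-posedness and scattering theory cited in the introduction provides an a priori global Strichartz bound $M_j$ for $\widetilde v$ depending only on $E(\vec v(t_j))$. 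The deterministic energy-critical stability/perturbation lemma (of Tao--Visan or Kenig--Merle type) then transfers this bound from $\widetilde v$ to $v$ on $I_j$, provided $\eta$ is chosen small enough in terms of $M_j$ so that $F(v) - F(v+\Psi)$ is small in the dual Strichartz space. Iterating from $I_j$ to $I_{j+1}$ using the energy Gronwall bound above yields a Strichartz control of $v$ on $[0,T]$, and hence a global-in-time solution.

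The main obstacle is keeping this iteration quantitatively closed: the smallness threshold $\eta_j$ depends on $M_j$, which depends on $E(\vec v(t_j))$, whose growth is itself governed by pathwise norms of $\Psi$ through the Gronwall estimate. One must therefore balance the number of subintervals (itself set by some Strichartz norm of $\Psi$) against the energy growth so that neither quantity blows up over $[0,T]$; this is precisely the content of the probabilistic perturbation scheme, and it is exactly here that the Hilbert--Schmidt hypothesis on $\phi$ must be strong enough to provide the required almost-sure pathwise integrability of $\Psi$.
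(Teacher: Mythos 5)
Your high-level scheme (Da Prato--Debussche decomposition, a priori energy bound, perturbation lemma iteration with the global Strichartz bound for the unperturbed flow) matches the paper's, but there are two connected errors that leave a genuine gap in the central step, the energy bound.

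First, the regularity you assert for $\Psi$ is wrong. With $\phi \in \HS(L^2(\R^d),L^2(\R^d))$, Lemma~\ref{LEM:Psi}(i) (with $s=0$) gives $\Psi \in C([0,T];H^1(\R^d))$ almost surely, and Remark~\ref{RMK:dtPsi} gives $\dt\Psi\in C([0,T];L^2(\R^d))$, so $\vec\Psi$ lies \emph{in} the energy space $\dot{\H}^1(\R^d)$. The threshold $s<\tfrac12$ you cite is not what the It\^o isometry produces here; the Hilbert--Schmidt hypothesis on $\phi$ gives the full derivative gain from $\frac{\sin(t|\nabla|)}{|\nabla|}$. This is not a cosmetic point: the fact that $E(\vec\Psi)<\infty$ is exactly what lets one pass an energy bound on $\vec u$ to a bound on $\vec v = \vec u - \vec\Psi$.

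Second, and consequent to the first, your Gronwall argument for $\frac{d}{dt}E(\vec v)$ does not close on $\R^d$. Estimating $\int \dt v\,\big(F(v+\Psi)-F(v)\big)\,dx$ by H\"older/Sobolev forces you to put $\Psi$ (or $\dt\Psi$, after integrating by parts in $t$ when $d=3$) in spaces of $L^\infty_x$ type with positive Sobolev regularity — this is precisely the hypothesis $z \in C_T W^{\s-\eps,\infty}_x$, $\dt z \in C_T W^{\s-\eps,\infty}_x$ in Proposition~\ref{PROP:energy2} (Cases~1 and~2), which the paper verifies only on $\T^d$ using Lemma~\ref{LEM:Phi} and the compactness of the torus. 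On $\R^d$ with $\phi\in\HS(L^2,L^2)$ you only control $\Psi$ in $H^1$-based and wave-Strichartz spaces; there is no pathwise $L^\infty_x$ control, so the Gronwall bound breaks. The paper's resolution is different and important: on $\R^d$ one applies It\^o's lemma directly to $E(\vec u(t))$ (Proposition~\ref{PROP:energy}), after a truncation/convergence argument (Lemma~\ref{LEM:uN}) that makes the It\^o computation rigorous, obtaining $\E\big[\sup_{t\le T} E(\vec u(t))\big]\lesssim E(u_0,u_1) + T\|\phi\|_{\HS(L^2,L^2)}^2$; then $\sup_t E(\vec v(t))\lesssim \sup_t E(\vec u(t)) + \sup_t E(\vec\Psi(t))$ is finite almost surely. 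This is where your proposal would need to be replaced, not just tightened. Once the $R$-valued bound $\sup_{t\le T} E(\vec v(t))\le R$ is in hand, the partition of $[0,T]$ with $J=J(R,\eta_0)$ subintervals and the iteration via Lemma~\ref{LEM:pert} runs as you describe, with all thresholds depending only on $R$.
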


Here, the assumption on $\phi$ is chosen in such a way that the stochastic convolution $\Psi$ lies in the energy space $\dot{\H}^1 (\R^d)$. See Lemma \ref{LEM:Psi} below.

For the proof of Theorem \ref{THM:R}, we first establish local well-posedness of \eqref{SNLW} in Section \ref{SEC:LWP} and then prove global well-posedness of \eqref{SNLW} in Subsection \ref{SUBSEC:GWP_R} below. To prove well-posedness of \eqref{SNLW}, we use the first order expansion $u = \Psi + v$ and consider the following equation for $v$:
\begin{align}
\begin{cases}
\dt^2 v - \Dl v + |v + \Psi|^{\frac{4}{d - 2}} (v + \Psi) = 0 \\
(v, \dt v)|_{t = 0} = (u_0, u_1).
\end{cases}
\label{SNLWv1}
\end{align}

\noi
By viewing \eqref{SNLWv1} as an energy-critical NLW for $v$ with a perturbation term, we adapt the {\it probabilistic perturbation theory} developed in \cite{BOP, Poc45} in the context of random data Cauchy theory. See Lemma \ref{LEM:pert} below. The perturbation theory has also been previously used to prove global well-posedness for other equations in deterministic settings, stochastic settings, or random initial data settings. See \cite{TVZ, CKSTT08, KM, KOPV, BOP, OOP, OO}. 

In order to apply the perturbation argument for the equation \eqref{SNLWv1}, we need to make sure that the stochastic convolution $\Psi$ is small on short time intervals. This can be done since $\Psi$ can be bounded in Strichartz spaces (i.e.~$L^q$ in time and $L^r$ in space), as in \cite{OO}. Nevertheless, due to the complicated nature of the wave equations, establishing the space-time regularity of $\Psi$ is non-trivial. See Lemma \ref{LEM:Psi} for more details.

Another important ingredient in carrying out the perturbation argument in our setting is an {\it a priori energy bound}. This is achieved by a rigorously justified application of Ito's lemma. See also \cite{dBD, OO, CL, CLO}.

\medskip
We now switch our attention to the periodic setting. Here, we assume that $\phi$ is a a Fourier multiplier operator on $L^2 (\T^d)$. Namely, for any $n \in \Z^d$,
\begin{align*}
\phi e^{in \cdot x} = \ft \phi_n e^{in \cdot x}
\end{align*}

\noi
for some $\ft \phi_n \in \C$.

\begin{theorem}
\label{THM:T}
Let $d \geq 3$, $\phi$ be a Fourier multiplier operator on $L^2 (\T^d)$, and $\phi \in \HS (L^{2} (\T^d),H^s (\T^d))$ with $s \in \R$ satisfying
\begin{align*}
\textup{(i)}~ d = 3: ~s > -\frac 12; \qquad \textup{(ii)}~ d = 4: ~ s > -1; \qquad \textup{(iii)}~ d \geq 5: ~s \geq -1.
\end{align*}
Then, the defocusing energy-critical SNLW \eqref{SNLW} is globally well-posed in $\H^1 (\T^d)$ in the sense that the following statement holds true almost surely; given $(u_0, u_1) \in \H^1 (\T^d)$, there exists a global-in-time solution $u$ to \eqref{SNLW} with $(u, \dt u)|_{t = 0} = (u_0, u_1)$.
\end{theorem}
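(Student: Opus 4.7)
The strategy parallels the $\R^d$ case of Theorem \ref{THM:R}, with essential modifications to handle the rough stochastic forcing on $\T^d$. As before, we adopt the first-order expansion $u = \Psi + v$, where the remainder $v$ solves
\begin{equation*}
\dt^2 v - \Dl v + |v + \Psi|^{\frac{4}{d-2}}(v + \Psi) = 0, \qquad (v, \dt v)|_{t=0} = (u_0, u_1).
\end{equation*}
The main new difficulty compared with $\R^d$ is that $\Psi$ no longer lies in the energy space $\H^1(\T^d)$; however, the Hilbert--Schmidt assumption on $\phi$ is calibrated so that $\Psi$ still has sufficient space-time integrability to make the $v$-equation meaningful in a Strichartz framework.

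\emph{Step 1: regularity of $\Psi$ on $\T^d$.} The first task is to establish a periodic analogue of Lemma \ref{LEM:Psi}: almost surely, $\Psi \in C([0,T]; W^{\sigma,p}(\T^d)) \cap L^q([0,T]; L^r(\T^d))$ for some $\sigma>0$ and for a suitable $\dot{\H}^1$-subcritical Strichartz pair $(q,r)$. Since $\phi$ is a Fourier multiplier, one computes $\E[|\ft\Psi_n(t)|^2]$ explicitly in terms of $\ft\phi_n$ and the wave kernel; Sobolev embedding and Minkowski then trade frequency weights for space-time integrability. The dimension-dependent thresholds $s>-\tfrac12,\ s>-1,\ s\ge -1$ are exactly what is required for the resulting sums over $n$ to converge, higher dimensions gaining more smoothing per admissible pair.

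\emph{Step 2: local well-posedness and a priori energy bound for $v$.} With the Strichartz regularity of $\Psi$ in hand, the $v$-equation can be solved by a fixed point argument in the standard energy-critical Strichartz space adapted to $v \in \H^1(\T^d)$: expanding the nonlinearity as a sum of monomials in $v$ and $\Psi$, each term is controlled by periodic Strichartz estimates combined with H\"older in space-time, treating the lowest-order pure $\Psi$ term as a deterministic source. This yields local-in-time existence on a random interval $[0,T_*]$. Next, we control the energy $E(\vec v)$: since $v \in \H^1$ but $\Psi$ is merely a distribution, one cannot apply It\^o to $u$ itself; instead, applying It\^o to $E(\vec v)$ produces only pairings that are well-defined thanks to the Sobolev/Strichartz control of $\Psi$ from Step 1 (here we crucially use that $\phi$ is a Fourier multiplier, so the noise commutes with $-\Dl$, cf.\ \cite{OO, CL}). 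A Gronwall-type argument then gives $E(\vec v(t))$ bounded by a polynomial in $t$ almost surely.

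\emph{Step 3: globalization via probabilistic perturbation.} Fixing an arbitrary terminal time $T>0$, we extend $v$ to $[0,T]$ by the perturbation argument of Lemma \ref{LEM:pert}, comparing $v$ on short subintervals with the solution $w$ of the deterministic energy-critical NLW \eqref{NLWec} with matching initial data $\vec v(t_j)$. Global well-posedness of \eqref{NLWec} in $\H^1(\T^d)$, obtained from the $\R^d$ scattering theory via finite speed of propagation, provides spacetime Strichartz bounds for $w$ depending only on $E(\vec v(t_j))$, which remains finite by Step 2. Choosing subintervals short enough that the Strichartz norms of $\Psi$ and of the forcing terms satisfy the smallness hypothesis of Lemma \ref{LEM:pert} closes the induction and produces a global-in-time solution $u=\Psi+v$.

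The main obstacle is Step 1: extracting the sharp range of admissible $s$ from the space-time analysis of $\Psi$ on $\T^d$. Unlike $\R^d$, the torus offers no uniform dispersive smoothing, so one must carefully balance periodic Strichartz losses against the gain afforded by the Hilbert--Schmidt condition $\phi \in \HS(L^2,H^s)$; this balance explains the dimension-dependent thresholds and dictates which Strichartz pairs are available in Step 2.
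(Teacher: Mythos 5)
Your high-level skeleton (first-order expansion $u=\Psi+v$, a priori bound on $E(\vec v)$, iteration via the long-time perturbation lemma) matches the paper, but two of the essential mechanisms are misattributed or missing, and each is a genuine gap.

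First, you attribute the dimension-dependent thresholds $s>-\tfrac12,\ s>-1,\ s\geq -1$ to the convergence of the Fourier sums defining $\Psi$. This is not where they come from. The regularity lemma for the periodic stochastic convolution (Lemma \ref{LEM:Phi}) places $\Phi$ (and its periodized, truncated extension $\mathbf{\Phi}_R$) in $L^q_T W^{s+1,r}_x \cap C_T W^{s+1-\eps,\infty}_x$ with $\dt\mathbf{\Phi}_R \in C_T W^{s-\eps,\infty}_x$, and it does so uniformly for all $s>-1$ and all $d$, with no dimension-dependent restriction at that stage. The thresholds instead appear in the a priori energy estimate (Proposition \ref{PROP:energy2}): for $d=3$ the quintic nonlinearity forces a term with $z$ and $\dt z$ that must be paired in a space/negative-space duality, requiring $\s>\tfrac12$ (hence $s>-\tfrac12$); for $d=4$ one needs $z\in L^\infty_x$, forcing $\s>0$; for $d\geq 5$ the nonlinearity is weak enough that $\s\geq 0$ suffices. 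Your Step 2 glosses over exactly the part of the proof that dictates the ranges claimed in the theorem. Relatedly, your mention of applying It\^o's lemma to $E(\vec v)$ is beside the point: once $\Psi$ is subtracted, the $v$-equation has no stochastic forcing, so the energy identity is obtained by plain differentiation, integration by parts in $t$, the fractional chain rule, and Gagliardo--Nirenberg; the paper explicitly avoids It\^o here precisely because $\Phi\notin\H^1(\T^d)$.

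Second, and more fundamentally, your globalization step relies on Strichartz and perturbation theory directly on $\T^d$. The paper does not — and cannot easily — do this: there is no global-in-time Strichartz or scattering theory on the torus, so Lemma \ref{LEM:bdd} (global $X$-bound from scattering) and the long-time perturbation lemma (Lemma \ref{LEM:pert}) are $\R^d$ results. The actual mechanism in the paper is to use finite speed of propagation to transfer the \emph{entire} stochastic Cauchy problem to $\R^d$: one introduces a cutoff $\eta_T$ supported on $[-4\pi T,4\pi T]^d$, replaces the initial data by $\eta_T \sum_n \ft{u_j}(n) e^{in\cdot x}$ and the noise by $\eta_T(\phi\xi)$, solves the resulting perturbed NLW \eqref{NLWvT} on $\R^d$ (where Proposition \ref{PROP:energy2} supplies the energy bound and the $\R^d$ perturbation machinery supplies global existence on $[0,T]$), and only then restricts to $[0,T]\times\T^d$, using finite speed to verify that the restriction solves \eqref{SNLW} on the torus. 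Without this transfer, your Step 3 has no valid perturbation lemma or global $X$-bound to invoke. You should restructure the argument around this $\R^d$-reduction, and move the discussion of dimension-dependent thresholds into the energy estimate rather than the noise regularity lemma.
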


Note that compared to the $\R^d$ case, there is an improvement in the regularity assumption of the noise term. This is mainly because of the better space-time regularity of the stochastic convolution $\Psi$ on a bounded domain. See Lemma \ref{LEM:Phi} for more details. Moreover, our global well-posedness result for $d \geq 5$ is optimal in the sense that the regularity of $\phi$ cannot be further lowered. See Remark \ref{RMK:opt} below.

Thanks to the finite speed of propagation, the proof of Theorem \ref{THM:T} follows from the same strategies for the $\R^d$ case. However, due to the lower regularity assumption of the noise, the stochastic convolution does not belong to the energy space $\H^1 (\T^d)$. Thus, for the a priori energy bound, instead of using Ito's lemma, we use a Gronwall-type argument developed by \cite{BTz14, OP16} in the context of random data Cauchy theory.

\medskip
We conclude this introduction by stating several remarks.
\begin{remark} \rm
\text{(i)} In \cite{OP16, Poc45}, the authors studied the defocusing energy-critical NLW \eqref{NLWec} on $\R^d$ with initial data below the energy space. In particular, using the Wiener randomization of the initial data, they proved global well-posedness of \eqref{NLWec} by establishing an energy bound via a Gronwall-type argument. 

However, at this point, we do not know whether one can prove any global well-posedness results for the defocusing energy-critial SNLW \eqref{SNLW} with initial data below the energy space. The main obstacle is that, even with randomized initial data, the Gronwall-type argument as in \cite{BTz14, OP16, Poc45} is not directly applicable due the lack of space-time regularity of the stochastic convolution $\Psi$.

\smallskip \noi
\text{(ii)} Compared to the $\R^d$ case, the situation is better on $\T^d$ and we can prove almost sure global well-posedness of the stochastic energy-critical defocusing NLW \eqref{SNLW} below the energy space. Specifically, we consider the following equation for $d \geq 3$:
\begin{align}
\begin{cases}
\dt^2 u - \Dl u + |u|^{\frac{4}{d - 2}} u = \phi \xi \\
(u, \dt u)|_{t = 0} = (u_0^\o, u_1^\o),
\end{cases}
\label{rand_init}
\end{align}

\noi
where $\phi$ satisfies the same condition as in Theorem \ref{THM:T} and $(u_0^\o, u_1^\o)$ is a randomization of $(u_0, u_1)$ defined by
\begin{align*}
(u_0^\o, u_1^\o) := \bigg( \sum_{n \in \Z^d} g_{n, 0} (\o) \ft{u_0} (n) e^{in \cdot x}, \sum_{n \in \Z^d} g_{n, 1} (\o) \ft{u_1} (n) e^{in \cdot x} \Bigg),
\end{align*}

\noi
where $\{g_{n, j}\}_{n \in \Z^d, j \in \{0, 1\}}$ is a sequence of independent mean zero complex-valued random variables conditioned such that $g_{-n, j} = \cj{g_{n, j}}$ for all $n \in \Z^d$. Moreover, we assume that there exists a constant $c > 0$ such that on the probability distributions $\mu_{n, j}$ of $g_{n, j}$, we have
\begin{align*}
\int e^{\g \cdot x} d \mu_{n, j} (x) \leq e^{c |\g|^2}, \quad j = 0, 1
\end{align*}

\noi
for all $\g \in \R^2$ when $n \in \Z^d \setminus \{0\}$ and all $\g \in \R$ when $n = 0$. 

Then, due to better integrability of $V(t) (u_0^\o, u_1^\o)$ compared to $V(t) (u_0, u_1)$ for any $t \geq 0$ (see, for example, \cite[Proposition 4.1 and Proposition 4.4]{OP17}), one can show the following result. Given $(u_0, u_1) \in \H^{s} (\T^d)$ with $s \in \R$ satisfying
\begin{align*}
\textup{(i)}~ d = 3: ~s > \frac 12; \qquad \textup{(ii)}~ d = 4: ~ s > 0; \qquad \textup{(iii)}~ d \geq 5: ~s \geq 0,
\end{align*}
there exists a global-in-time solution $u$ to the equation \eqref{rand_init} almost surely.
\end{remark}

\begin{remark} \rm
\label{RMK:opt}
As stated in Theorem \ref{THM:T}, we obtain global well-posedness of \eqref{SNLW} with $\phi \in \HS (L^2 (\T^d), H^s (\T^d))$, where $s > -1$ if $d = 4$ and $s \geq -1$ if $d \geq 5$. Note that when $s < -1$, the stochastic convolution $\Psi$ is merely a distribution, and hence a proper renormalization is needed to make sense of the power-type nonlinearity.  For this purpose, the nonlinearity must be an integer power of the form $u^k$, which is only true for $d \leq 4$. This means that our global well-posedness result of \eqref{SNLW} on $\T^d$ for $d \geq 5$ is the best that we can achieve.

In the case $d = 4$, it is natural to ask whether it is possible to extend the well-posedness theory of \eqref{SNLW} on $\T^4$ with the stochastic convolution $\Psi$ being merely a distribution. The answer is no. Indeed, in a recent preprint \cite{OOPTz}, the authors showed an ill-posedness result for the following (renormalized) stochastic NLW on $\T^d$:
\begin{align*}
\dt^2 u + (1 - \Dl) u + u^k = \phi \xi,
\end{align*}

\noi
where $k \geq 2$ is an integer and $\phi$ is a Fourier multiplier operator with $\phi \in \HS (L^2 (\T^d), H^s (\T^d))$, $s < -1$. Hence, our global well-posedness result for the defocusing energy-critical SNLW \eqref{SNLW} on $\T^4$ is sharp up to the endpoint. It would be of interest to show global well-posedness of \eqref{SNLW} in the endpoint case $s = -1$ for $d = 4$, which seems to require a more intricate analysis. See also Remark \ref{RMK:dtE} (ii) below.
\end{remark}

\section{Preliminary results and lemmas}
In this section, we recall some notations, definitions, useful lemmas, and previous results.

For two positive numbers $A$ and $B$, we use $A \les B$ to denote $A \leq C B$ for some constant $C > 0$. Also, we use shorthand notations for space-time function spaces, such as $C_T H^s_x$ for $C([0, T], H^s (\R^d))$ or $C([0, T], H^s (\T^d))$.

We recall that if $H_{1}$, $H_{2}$ are Hilbert spaces, then for a linear operator $\phi$ from $H_{1}$ to $H_{2}$, we denote
\[ \| \phi \|_{\HS (H_{1},H_{2})} = \Big( \sum_{n \in \N} \| \phi e_{n} \|_{H_{2}}^{2} \Big)^{1 / 2} \]
as the Hilbert-Schmidt operator norm of $\phi$, where $\{e_{n}\}_{n \in \N}$ is an orthonormal basis of $H_{1}$.

\subsection{Preliminary lemmas on Sobolev spaces}
\label{SUB:LEM_Rd}
In this subsection, we recall Sobolev spaces on $\R^d$ and $\T^d$ and also some useful estimates. 

For $s \in \R$, we denote by $\dot{H}^s (\R^d)$ the homogeneous $L^2$-based Sobolev space with the norm
\begin{align*}
\| f \|_{\dot{H}^s (\R^d)} := \big\| |\xi|^s \ft f (\xi) \big\|_{L^2_\xi (\R^d)},
\end{align*}

\noi
where $\ft f$ is the Fourier transform of $f$. We denote by $\dot{H}^s (\R^d)$ the inhomogeneous $L^2$-based Sobolev space with the norm
\begin{align*}
\| f \|_{H^s (\R^d)} := \big\| \jb{\xi}^s \ft f (\xi) \big\|_{L^2_\xi (\R^d)},
\end{align*}   

\noi
where $\jb{\cdot} = (1 + |\cdot|^2)^{\frac 12}$. We also define
\begin{align*}
\dot{\H}^s (\R^d) := \dot{H}^s (\R^d) \times \dot{H}^{s - 1} (\R^d) \quad \text{and} \quad \H^s (\R^d) := H^s (\R^d) \times H^{s - 1} (\R^d).
\end{align*}

\noi
For $1 < p \leq \infty$, we denote by $W^{s, p} (\R^d)$ the $L^p$-based Sobolev space with the norm
\begin{align*}
\| f \|_{W^{s, p} (\R^d)} := \big\| \F^{-1} \big( \jb{\xi}^s \ft f (\xi) \big) \big\|_{L^p (\R^d)},
\end{align*}

\noi
where $\F^{-1}$ denotes the inverse Fourier transform.

On $\T^d$, for $s \in \R$, we denote by $H^s (\T^d)$ the inhomogeneous $L^2$-based Sobolev space with the norm
\begin{align*}
\| f \|_{H^s (\T^d)} &:= \big\| \jb{n}^s \ft f (n) \big\|_{\l_n^2 (\Z^d)}.
\end{align*}

\noi
We also define
\begin{align*}
\H^s (\T^d) := H^s (\T^d) \times H^{s - 1} (\T^d).
\end{align*}


\medskip
We now recall some useful estimates for Sobolev spaces, starting with the following fractional chain rule. For a proof, see \cite{CW}.\footnote{Here, we use the version in \cite[Theorem 3.3.1]{Sta}. As pointed out in \cite{Sta}, the proof in \cite{CW} needs a small correction, which yields the fractional chain rule in a less general context. See also \cite{Kato, Sta, Tay}.}
\begin{lemma}
\label{LEM:chain}
Let $d \geq 1$, $s \in (0,1)$, and $r > 2$. Let $1 < p, p_1 < \infty$ and $1 < p_2 \leq \infty$ satisfying $\frac{1}{p} = \frac{1}{p_1} + \frac{1}{p_2}$. Let $F$ denote the function $F(u) = |u|^{r-1}u$ or $F(u) = |u|^r$. Then, we have
\begin{align*}
\| F(u) \|_{W^{s,p}(\R^d)} \les \| u \|_{W^{s,  p_1}(\R^d)} \big\| |u|^{r-1} \big\|_{L^{p_2}(\R^d)}.
\end{align*}
\end{lemma}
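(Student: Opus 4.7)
The plan is to reduce Lemma \ref{LEM:chain} to a pointwise bound on differences of $F(u)$ combined with a finite-difference characterization of the fractional Sobolev seminorm. First, since Hölder's inequality with $\tfrac{1}{p} = \tfrac{1}{p_1} + \tfrac{1}{p_2}$ gives
\begin{align*}
\|F(u)\|_{L^p(\R^d)} \lesssim \|u\|_{L^{p_1}(\R^d)} \big\| |u|^{r-1}\big\|_{L^{p_2}(\R^d)} \leq \|u\|_{W^{s,p_1}(\R^d)} \big\| |u|^{r-1}\big\|_{L^{p_2}(\R^d)},
\end{align*}
it suffices to establish the homogeneous estimate
\begin{align*}
\big\| |\nabla|^s F(u) \big\|_{L^p(\R^d)} \lesssim \big\| |\nabla|^s u\big\|_{L^{p_1}(\R^d)} \big\| |u|^{r-1}\big\|_{L^{p_2}(\R^d)}.
\end{align*}

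Since $r > 2$, $F$ belongs to $C^1(\R)$ with $|F'(w)| \lesssim |w|^{r-1}$, so the mean value theorem yields the pointwise bound
\begin{align*}
\big| F(u(x+h)) - F(u(x)) \big| \lesssim |u(x+h) - u(x)| \cdot \big( |u(x+h)|^{r-1} + |u(x)|^{r-1} \big)
\end{align*}
for every $x, h \in \R^d$. Next, I would use the Strichartz-type equivalence (valid for $s \in (0,1)$ and $1 < p < \infty$)
\begin{align*}
\big\| |\nabla|^s f \big\|_{L^p_x} \sim \bigg\| \bigg( \int_0^\infty \bigg( \fint_{|h| \leq t} |f(x+h) - f(x)| \, dh \bigg)^{\!2} \frac{dt}{t^{2s+1}} \bigg)^{\!1/2} \,\bigg\|_{L^p_x},
\end{align*}
applied to $f = F(u)$. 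Inserting the pointwise bound, the inner averaged difference is controlled by a product of two factors: the averaged first-order difference of $u$ (which, via Minkowski's integral inequality in $h$, will recover the corresponding square function for $u$) and a term dominated by the Hardy--Littlewood maximal function of $|u|^{r-1}$.

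Finally, Hölder's inequality in $x$ with exponents $p_1, p_2$ separates these two factors; the Fefferman--Stein vector-valued maximal inequality bounds the $|u|^{r-1}$ contribution in $L^{p_2}$ (using $p_2 > 1$), while the $|u(x+h) - u(x)|$ contribution is identified with $\| |\nabla|^s u\|_{L^{p_1}}$ by the reverse direction of the Strichartz equivalence. The main obstacle is carrying out this last step cleanly: one must commute the $L^2(dt/t^{2s+1})$ norm past the spatial average in $h$ in a vector-valued sense, and keep careful track of indices so that the Fefferman--Stein inequality applies with the right summation exponent. The endpoint $p_2 = \infty$ is easier, as there $F$ is effectively Lipschitz on the range of $u$ with constant $\lesssim \| |u|^{r-1}\|_{L^\infty}$, and the inequality reduces at once to the Sobolev estimate for $u$.
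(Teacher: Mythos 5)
The paper does not prove this lemma; it cites Christ--Weinstein \cite{CW} and, in a footnote, the corrected version \cite[Theorem 3.3.1]{Sta}, noting that the proof in \cite{CW} needs a small correction. Your preliminary steps --- H\"older for the $L^p$ piece, the $C^1$ bound $|F(a)-F(b)|\lesssim |a-b|\,(|a|^{r-1}+|b|^{r-1})$, and the Strichartz difference characterization of $\||\nabla|^s\cdot\|_{L^p}$ for $s\in(0,1)$, $1<p<\infty$ --- follow the Christ--Weinstein scheme and are fine.

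The gap is in the claim that ``the inner averaged difference is controlled by a product of two factors.'' Inserting the pointwise bound gives
\begin{align*}
\fint_{|h|\le t}\big|F(u(x+h))-F(u(x))\big|\,dh
&\lesssim \fint_{|h|\le t}|u(x+h)-u(x)|\,|u(x+h)|^{r-1}\,dh \\
&\quad + |u(x)|^{r-1}\fint_{|h|\le t}|u(x+h)-u(x)|\,dh,
\end{align*}
and only the second summand factors as you describe. In the first summand the weight $|u(x+h)|^{r-1}$ sits inside the $h$-average and is correlated with $|u(x+h)-u(x)|$, so it cannot be replaced pointwise by the Hardy--Littlewood maximal function $M(|u|^{r-1})(x)$, and Fefferman--Stein does not apply in the form you sketch. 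The natural fix --- Cauchy--Schwarz in $h$ to obtain $\big(\fint_{|h|\le t}|u(x+h)-u(x)|^2\,dh\big)^{1/2}\big(M(|u|^{2(r-1)})(x)\big)^{1/2}$ --- trades your $L^1$-averaged-difference square function for the $L^2$-averaged one, whose $L^{p_1}$-equivalence with $\||\nabla|^s u\|_{L^{p_1}}$ is a separate theorem (Stein's $g_s$-function characterization) carrying its own range restrictions, and the resulting maximal function forces $p_2>2$. This is exactly the subtlety behind the footnote's remark that the corrected argument ``yields the fractional chain rule in a less general context.'' The step you defer to the end is not bookkeeping but the crux, and it does not go through as stated.
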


We also need the following Gagliardo-Nirenberg interpolation inequality. The proof of this inequality follows directly from Sobolev's inequality and interpolation.
\begin{lemma}\label{LEM:Gag}
Let $d \geq 1$, $1 < p_1, p_2 < \infty$, and $s_1,s_2 > 0$. Let $p > 1$ and $\ta \in (0,1)$ satisfying
\[ -\frac{s_1}{d} + \frac{1}{p} = (1-\ta)\bigg( - \frac{s_2}{d} + \frac{1}{p_1}  \bigg) + \frac{\ta}{p_2} \quad \text{and} \quad s_1 \leq (1-\ta)s_2. \]
Then, we have
\[ \| u \|_{W^{s_1,p}(\R^d)} \les \| u \|_{W^{s_2,p_1}(\R^d)}^{1-\ta} \| u \|_{L^{p_2}(\R^d)}^\ta. \]
\end{lemma}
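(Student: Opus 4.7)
The plan is to combine complex interpolation of Bessel potential spaces with a critical-scaling Sobolev embedding. First, I would set $\frac{1}{r} := \frac{1-\ta}{p_1} + \frac{\ta}{p_2}$; since $p_1, p_2 \in (1, \infty)$ and $\ta \in (0,1)$, we have $r \in (1, \infty)$. The complex interpolation identity
\begin{align*}
[W^{s_2, p_1}(\R^d), L^{p_2}(\R^d)]_\ta = W^{(1-\ta) s_2, r}(\R^d),
\end{align*}
standard for Bessel potential spaces at these exponents, immediately yields
\begin{align*}
\| u \|_{W^{(1-\ta) s_2, r}(\R^d)} \les \| u \|_{W^{s_2, p_1}(\R^d)}^{1-\ta} \| u \|_{L^{p_2}(\R^d)}^{\ta}.
\end{align*}

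Next, I would verify that the standard Sobolev embedding $W^{(1-\ta) s_2, r}(\R^d) \embeds W^{s_1, p}(\R^d)$ applies. Unpacking the first hypothesis gives
\begin{align*}
-\frac{s_1}{d} + \frac{1}{p} = -\frac{(1-\ta) s_2}{d} + \frac{1}{r},
\end{align*}
i.e.\ $s_1 - \frac{d}{p} = (1-\ta) s_2 - \frac{d}{r}$, which is exactly the critical Sobolev scaling relation. The second hypothesis $s_1 \leq (1-\ta) s_2$, combined with this identity, forces $\frac{1}{r} \geq \frac{1}{p}$, so $r \leq p$ and the embedding applies. Chaining the two bounds gives the claim.

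There is essentially no obstacle here; the proof is bookkeeping on exponents. The only subtlety is the use of complex interpolation for inhomogeneous Bessel potential spaces, which relies on $W^{s, p}(\R^d)$ being the image of $L^p(\R^d)$ under $\jb{\nabla}^{-s}$ for $1 < p < \infty$; this is classical, and the restriction $p_1, p_2 \in (1, \infty)$ in the hypothesis is precisely what excludes the problematic endpoints. A purely harmonic-analytic alternative would be to decompose $u = \sum_N P_N u$ via Littlewood--Paley, estimate each $\|P_N u\|_{L^p}$ by interpolating between an $L^q$ norm (controlled by $L^{p_1}$ via Bernstein's inequality) and $L^{p_2}$, and resum using the square function theorem; but the interpolation route above is shorter and is presumably what the authors have in mind.
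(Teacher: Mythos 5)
Your proof is correct and follows exactly the route the paper indicates (``follows directly from Sobolev's inequality and interpolation''): complex interpolation $[W^{s_2,p_1},L^{p_2}]_\ta=W^{(1-\ta)s_2,r}$ followed by the Sobolev embedding $W^{(1-\ta)s_2,r}\embeds W^{s_1,p}$. The exponent bookkeeping, including using $s_1\le(1-\ta)s_2$ to force $r\le p$, is the intended argument.
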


\subsection{Previous results on wave equations}
In this subsection, we record some results on wave equations. 

Let us first recall the Strichartz estimate for linear wave equations. Let $\g \in \R$ and $d \geq 2$. We say that a pair $(q, r)$ is $\dot{H}^\g (\R^d)$-wave admissible if $q \geq 2$ and $2 \leq r < \infty$ satisfy
\begin{align*}
\frac 1q + \frac{d - 1}{2r} \leq \frac{d - 1}{4} \quad \text{and} \quad \frac 1q + \frac dr = \frac d2 - \g.
\end{align*}

\noi
The following Strichartz estimates for wave equations are well-studied. See, for example, \cite{GSV, LS, KT}.
\begin{lemma}
\label{LEM:str}
Let $\g > 0$, $d \geq 2$, $(q, r)$ be $\dot{H}^\g (\R^d)$-wave admissible, and $(\wt q, \wt r)$ be $\dot{H}^{1 - \g} (\R^d)$-wave admissible. Let $u$ be the solution of
\begin{align*}
\begin{cases}
\dt^2 u - \Dl u + F = 0 \\
(u, \dt u)|_{t = 0} = (u_0, u_1)
\end{cases}
\end{align*}

\noi
on $I \times \R^d$, where $I \subset \R$ is an interval containing 0. Then, we have
\begin{align*}
\| u \|_{L_I^\infty \dot{H}_x^\g} + \| \dt u \|_{L_I^\infty \dot{H}_x^{\g - 1}} + \| u \|_{L_I^q L_x^r} \les \| u_0 \|_{\dot{H}^\g} + \| u_1 \|_{\dot{H}^{\g - 1}} + \| F \|_{L_I^{\wt q '} L_x^{\wt r '}},
\end{align*}

\noi
where $\wt q '$ and $\wt r '$ are H\"older conjugates of $\wt q$ and $\wt r$, respectively.
\end{lemma}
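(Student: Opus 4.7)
The plan is to follow the classical roadmap for Strichartz estimates for the half-wave propagator, and then transfer the bounds to the sine/cosine propagators used here. First I would split the solution as $u = \cos(t|\nabla|) u_0 + \frac{\sin(t|\nabla|)}{|\nabla|} u_1 - \int_0^t \frac{\sin((t-t')|\nabla|)}{|\nabla|} F(t')\,\mathd t'$ and note that each half-wave propagator $e^{\pm i t |\nabla|}$ satisfies the $L^2$-conservation $\|e^{\pm it |\nabla|} f\|_{L^2_x} = \|f\|_{L^2_x}$. The $L^\infty_I \dot H^\g_x$ and $L^\infty_I \dot H^{\g-1}_x$ pieces then reduce to an energy estimate, which is standard and follows from Plancherel applied to the Duhamel formula combined with the inhomogeneous forcing in $L^{\wt q'}_I L^{\wt r'}_x$ via Sobolev embedding and duality against the dual Strichartz pair.

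For the space-time norm $\|u\|_{L^q_I L^r_x}$ I would establish the homogeneous estimate $\|e^{it|\nabla|} f\|_{L^q_t L^r_x(\R \times \R^d)} \les \|f\|_{\dot H^\g}$ in two steps. After a Littlewood-Paley decomposition $f = \sum_{N} P_N f$ and rescaling, it suffices to bound the unit-frequency piece. The key input is the dispersive estimate
\begin{align*}
\| e^{\pm it|\nabla|} P_1 f \|_{L^\infty_x(\R^d)} \les (1 + |t|)^{-\frac{d-1}{2}} \| f \|_{L^1_x(\R^d)},
\end{align*}
obtained from stationary phase on the sphere. Interpolating with $L^2$-conservation gives $\|e^{\pm it|\nabla|} P_1 f\|_{L^r_x} \les (1+|t|)^{-\frac{d-1}{2}(1 - 2/r)} \|f\|_{L^{r'}_x}$. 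A standard $TT^\ast$ argument combined with the Hardy--Littlewood--Sobolev inequality then yields the Strichartz estimate whenever $(q, r)$ satisfies the admissibility conditions stated in the lemma; assembling the frequency pieces via Littlewood--Paley square-function estimates and rescaling recovers the homogeneous Sobolev loss $\g$ on the right-hand side.

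To handle the inhomogeneous term I would apply Minkowski and the homogeneous estimate to $\int_0^t \frac{\sin((t-t')|\nabla|)}{|\nabla|} F(t')\,\mathd t'$ by dualizing in the second variable. Writing the wave propagator as a combination of $e^{\pm i (t-t') |\nabla|}|\nabla|^{-1}$ and pairing with a test function in $L^{\wt q}_I L^{\wt r}_x$, the bound reduces to the homogeneous estimate for both pairs $(q, r)$ and $(\wt q, \wt r)$ via Plancherel in time on the full line; the truncation to the retarded domain $\{t' < t\}$ is recovered through the Christ--Kiselev lemma, which applies as soon as $\wt q' < q$, a condition enforced away from the double endpoint by the strict scaling inequality. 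Finally one absorbs the Sobolev weights $\g$ and $1 - \g$ using that the admissibility conditions are tailored so that $|\nabla|^{-\g}$ and $|\nabla|^{\g-1}$ match precisely with the duality pairing.

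The main obstacle is the so-called double endpoint case where both $(q,r)$ and $(\wt q, \wt r)$ saturate $\frac{1}{q} + \frac{d-1}{2r} = \frac{d-1}{4}$, where the Christ--Kiselev trick fails. For $d \geq 4$ this requires the much more delicate bilinear interpolation argument of Keel--Tao, which proceeds by a dyadic decomposition of the time variable and an atomic decomposition of the spatial functions; this is the only step where the proof is genuinely nontrivial. All other steps are routine consequences of the dispersive decay, $TT^\ast$, and Hardy--Littlewood--Sobolev, and the statement can then simply be cited from \cite{GSV, LS, KT}.
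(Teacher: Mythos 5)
The paper offers no proof of this lemma; it is simply cited from the standard references \cite{GSV, LS, KT}. Your sketch is a faithful condensation of the classical argument underlying those references — dispersive decay for the frequency-localized half-wave propagator via stationary phase, interpolation with $L^2$ conservation, $TT^*$ plus Hardy--Littlewood--Sobolev for the non-endpoint homogeneous estimate, Christ--Kiselev to pass to the retarded inhomogeneous estimate when $\wt q' < q$, and the Keel--Tao bilinear interpolation argument at the double endpoint. The transfer to the sine/cosine propagators and the bookkeeping of the homogeneous Sobolev weights $\g$ and $1-\g$ are handled correctly. In short, you are reconstructing the proof that the paper chooses to cite rather than reproduce, and the reconstruction is sound.
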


In the energy-critical setting, we will frequently use the following Strichartz space:
\begin{align}
\| u \|_{X (I)} = \| u \|_{X(I \times \R^d)} := \| u \|_{L_I^{\frac{d + 2}{d - 2}} L_x^{\frac{2(d + 2)}{d - 2}} (\R^d)},
\label{defX}
\end{align}

\noi
where one can easily check that the pair $(\frac{d + 2}{d - 2}, \frac{2(d + 2)}{d - 2})$ is $\dot{H}^1$-wave admissible. Another frequently used pair is $(\infty, 2)$, which is $\dot{H}^0$-wave admissible.

\medskip
We now recall the following global space-time bound on the solution to the deterministic defocusing energy-critical NLW.
\begin{lemma}
\label{LEM:bdd}
Let $d \geq 3$ and $(w_0, w_1) \in \dot{\H}^1 (\R^d)$. Let $w$ be the solution of the energy-critical defocusing NLW:
\begin{align*}
\begin{cases}
\dt^2 w - \Dl w + |w|^{\frac{4}{d - 2}} w = 0 \\
(w, \dt w)|_{t = 0} = (w_0, w_1).
\end{cases}
\end{align*}

\noi
Then, 
\begin{align*}
\| w \|_{X (\R_+ \times \R^d)} < C \big( \| (w_0, w_1) \|_{\dot{\H}^1 (\R^d)} \big),
\end{align*}

\noi
where $C (\cdot) > 0$ is a non-decreasing function.
\end{lemma}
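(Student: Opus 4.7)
The plan is to recover Lemma \ref{LEM:bdd} by following the classical Struwe--Grillakis--Shatah--Struwe programme for the energy-critical defocusing NLW, extended to the full range $d \geq 3$ by Bahouri--Shatah, Bahouri--G\'erard, Nakanishi, and Tao. The starting point is \emph{energy conservation}: multiplying the equation by $\dt w$ and integrating by parts gives $E(\vec w(t)) = E(w_0, w_1)$, so by Sobolev embedding $\|\vec w(t)\|_{\dot{\H}^1(\R^d)}$ is controlled uniformly in $t$ by a function of $\|(w_0,w_1)\|_{\dot{\H}^1}$. This alone is not enough in the critical setting, because the Strichartz-based local theory (via Lemma \ref{LEM:str} with the $\dot H^1$-admissible pair $(q,r)=(\tfrac{d+2}{d-2},\tfrac{2(d+2)}{d-2})$ in the definition \eqref{defX} of $X$) produces a local-in-time lifespan that depends on the \emph{profile} of the data, not merely the energy.

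The second, crucial ingredient is a \emph{Morawetz-type non-concentration estimate}. Using the multiplier $\tfrac{x}{|x|}\cdot\nabla w + \tfrac{d-1}{2|x|}w$ on the truncated backward light cone $K_T=\{(t,x):|x|\leq T-t,\ 0\leq t\leq T\}$, together with integration by parts and the positivity of the resulting flux terms on the mantle, one obtains an a priori bound of the form
\[
\iint_{K_T} \frac{|w|^{\frac{2d}{d-2}}}{|x|}\,dx\,dt \;\les\; E(\vec w),
\]
uniformly in $T$. Combined with the conservation of energy and finite speed of propagation (to translate any would-be concentration point to the origin), this excludes concentration of the $L^{\frac{2d}{d-2}}$-norm at any space-time point.

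Finally, one promotes the local theory to a global space-time bound by a bootstrap argument on the $X$-norm. Partition $\R_+$ into finitely many intervals $I_j$ on which $\|w\|_{X(I_j)}$ is below the small-data threshold of the local theory; on each such interval, Strichartz with the perturbation structure of the nonlinearity controls $w$ by its energy norm and the length of $I_j$. The Morawetz-type estimate, together with an argument ruling out blow-up of the Strichartz norm (as in \cite{Gri92, SS94, Tao06}), shows that the number of such intervals and the $X$-norm on each are bounded solely in terms of $E(\vec w)$, yielding the claimed non-decreasing dependence $C(\|(w_0,w_1)\|_{\dot{\H}^1(\R^d)})$.

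The main obstacle, and the point where the deep work of the above references is needed, is exactly the bootstrap step in higher dimensions: unlike the subcritical case, the $\dot{\H}^1$ norm alone does not control the length of a Strichartz interval, so one must combine the Morawetz non-concentration estimate with a careful finite-speed-of-propagation and rescaling argument to conclude that the total $X$-norm depends only on the energy. For the purposes of the present paper this result is invoked as a black box, and no refinement is needed.
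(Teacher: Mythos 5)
The paper gives no proof of Lemma \ref{LEM:bdd}; it cites \cite[Lemma 4.6]{Poc45} and the references therein, so in that sense both you and the paper invoke the result as a black box, and the literature you point to is the right one.

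Your accompanying sketch has, however, a genuine gap in the last step. Energy conservation and the Morawetz flux estimate rule out concentration and give global existence and finiteness of $\|w\|_{X}$ for each fixed datum, as in Grillakis and Shatah--Struwe; but they do not, via a partition-and-count argument, produce the stated \emph{uniform} bound $\|w\|_{X(\R_+\times\R^d)} \leq C(\|(w_0,w_1)\|_{\dot{\H}^1})$ with $C$ depending only on the energy norm. The number of small-$X$-norm intervals a priori depends on the profile of the solution, not merely on its energy, and the Morawetz integral, weighted by $1/|x|$, does not control the unweighted $X$-norm in any simple way. Obtaining uniformity in the energy alone is precisely the hard content of Lemma \ref{LEM:bdd}: in $d=3$ it requires Tao's quantitative ``induction on energy'' argument \cite{Tao06}, which uses the Morawetz estimate but in a far more intricate localized and iterated fashion, and for general $d\geq 3$ one uses the concentration-compactness / profile-decomposition framework of Bahouri--G\'erard and Kenig--Merle \cite{BG, KM}, arguing by contradiction through a minimal-energy, compact-modulo-symmetries blow-up solution. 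Your outline omits this ingredient, and the claim that the Morawetz estimate bounds the number of intervals ``solely in terms of $E(\vec w)$'' would fail as stated. This does not affect the rest of the paper, since the result is anyway taken as a black box, but the sketch should not be read as a correct proof outline.
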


For a proof of Lemma \ref{LEM:bdd}, see \cite[Lemma 4.6]{Poc45} and also the references therein, where the steps can be easily extended to $d \geq 3$.

\medskip
Lastly, we recall the following long-time perturbation lemma from \cite[Lemma 4.5]{Poc45}. The lemma in \cite{Poc45} was stated for $d = 4, 5$, but it can be easily extended to $d \geq 3$.
\begin{lemma}
\label{LEM:pert}
Let $d \geq 3$, $(v_0, v_1) \in \dot{\H}^1 (\R^d)$, and $M > 0$. Let $I \subset \R_+$ be a compact time interval and $t_0 \in \I$. Let $v$ be a solution on $I \times \R^d$ of the following perturbed equation
\begin{align*}
\begin{cases}
\dt^2 v - \Dl v + |v|^{\frac{4}{d - 2}} v = f \\
(v, \dt v)|_{t = t_0} = (v_0, v_1)
\end{cases}
\end{align*}

\noi
with
\begin{align*}
\| v \|_{X (I \times \R^d)} \leq M.
\end{align*}

\noi
Let $(w_0, w_1) \in \dot{\H}^1 (\R^d)$ and let $w$ be the solution of the defocusing energy-critical NLW:
\begin{align*}
\begin{cases}
\dt^2 w - \Dl w + |w|^{\frac{4}{d - 2}} w = 0 \\
(w, \dt w)|_{t = 0} = (w_0, w_1).
\end{cases}
\end{align*}

\noi
Then, there exists $\wt{\eps} (M) > 0$ sufficiently small such that if
\begin{align*}
\| (v_0 - w_0, v_1 - w_1) \|_{\dot{\H}^1} &\leq \eps, \\
\| f \|_{L_I^1 L_x^2} &\leq \eps
\end{align*}

\noi
for some $0 < \eps < \wt{\eps} (M)$, then the following holds:
\begin{align*}
\| (v - w, \dt v - \dt w ) \|_{L_I^\infty \dot{\H}_x^1} + \| v - w \|_{L_I^q L_x^r} \leq C(M) \eps
\end{align*}

\noi
for all $\dot{H}^1$-wave admissible pairs $(q, r)$, where $C(\cdot)$ is a non-decreasing function.
\end{lemma}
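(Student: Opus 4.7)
The plan is to establish Lemma \ref{LEM:pert} by the classical subdivision-plus-iteration strategy: control the difference $e := v - w$ via Strichartz estimates on a carefully chosen partition of $I$, then compound the local bounds across the partition.

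First, using the a priori bound $\|v\|_{X(I)} \leq M$, I would partition $I$ into $J = J(M,\eta)$ consecutive subintervals $I_j = [t_j, t_{j+1}]$ with $\|v\|_{X(I_j)} \leq \eta$, where $\eta = \eta(d) > 0$ is a small absolute constant to be chosen; since $X$ has the time exponent $(d+2)/(d-2)$, one has $J \lesssim (M/\eta)^{(d+2)/(d-2)}$. Subtracting the two equations, $e$ satisfies
\[
\dt^2 e - \Dl e = -\bigl(|v|^{\frac{4}{d-2}}v - |w|^{\frac{4}{d-2}}w\bigr) + f,
\]
and the elementary pointwise bound $\bigl||v|^{\frac{4}{d-2}}v - |w|^{\frac{4}{d-2}}w\bigr| \lesssim (|v|^{\frac{4}{d-2}} + |e|^{\frac{4}{d-2}})|e|$ together with H\"older gives
\[
\bigl\| |v|^{\frac{4}{d-2}}v - |w|^{\frac{4}{d-2}}w \bigr\|_{L^1_{I_j} L^2_x} \lesssim \bigl(\|v\|_{X(I_j)}^{\frac{4}{d-2}} + \|e\|_{X(I_j)}^{\frac{4}{d-2}}\bigr) \|e\|_{X(I_j)},
\]
because the pair $((d+2)/(d-2),2(d+2)/(d-2))$ is $\dot{H}^1$-wave admissible and $((d+2)/(d-2))\cdot \frac{d+2}{d-2}$ exponents match $L^1_t L^2_x$.

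Next, on each $I_j$ I would apply the Strichartz estimate of Lemma \ref{LEM:str} with $(q,r) = ((d+2)/(d-2),2(d+2)/(d-2))$ and the dual pair $(\tilde q',\tilde r') = (1,2)$ corresponding to the $\dot H^0$-admissible pair $(\infty,2)$. Together with the nonlinear bound this yields
\[
\|(e,\dt e)\|_{L^\infty_{I_j}\dot\H^1_x} + \|e\|_{X(I_j)} \leq C \bigl(\|(e(t_j),\dt e(t_j))\|_{\dot\H^1} + \|f\|_{L^1_{I_j} L^2_x}\bigr) + C\bigl(\eta^{\frac{4}{d-2}} + \|e\|_{X(I_j)}^{\frac{4}{d-2}}\bigr) \|e\|_{X(I_j)}.
\]
Choosing $\eta$ so that $C\eta^{4/(d-2)} \leq 1/4$ absorbs the first nonlinear term into the left-hand side, and a standard continuity argument (with bootstrap hypothesis that $\|e\|_{X(I_j)}$ remains below a fixed threshold) absorbs the second, producing
\[
\|(e,\dt e)\|_{L^\infty_{I_j}\dot\H^1_x} + \|e\|_{X(I_j)} \leq C_0 \bigl(\|(e(t_j),\dt e(t_j))\|_{\dot\H^1} + \|f\|_{L^1_{I_j} L^2_x}\bigr).
\]

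Finally, I would iterate this inequality across the $J$ subintervals, starting at $t_0$ from the hypothesis $\|(v_0-w_0, v_1-w_1)\|_{\dot\H^1} \leq \eps$ and using $\|f\|_{L^1_I L^2_x} \leq \eps$; the errors satisfy a geometric recursion that sums to a final bound $C_0^{J+1}\eps \leq C(M)\eps$. The main obstacle is the usual one for long-time perturbation: to legitimately close the bootstrap on every subinterval $I_j$, the smallness threshold $\tilde\eps(M)$ must be taken small enough that the compounded constant $C_0^{J(M)}$ still leaves $\|e\|_{X(I_j)}$ below the threshold at every step. This is standard once $\eta$ is fixed independent of $M$, but it is the reason $\tilde\eps$ must depend on $M$; the result for general $(q,r)$ then follows by a final Strichartz estimate applied to $e$ on the full interval $I$.
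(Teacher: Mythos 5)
Your argument is the standard long-time perturbation scheme (subdivide $I$ so that $\|v\|_{X(I_j)} \leq \eta$, apply Strichartz with the dual pair $(1,2)$ on each piece, absorb the nonlinear terms, and compound the local constants geometrically), and it is essentially the proof given in the cited reference \cite[Lemma 4.5]{Poc45}, to which the paper defers without providing its own argument. The exponent bookkeeping in your H\"older step and the admissibility of $(\tfrac{d+2}{d-2}, \tfrac{2(d+2)}{d-2})$ check out, and your remark about why $\wt\eps$ must shrink with $M$ (to keep the accumulated $C_0^{J(M)}\eps$ below the bootstrap threshold) is exactly the correct subtlety.
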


\subsection{Regularity of stochastic convolutions}
In this subsection, we study regularity properties of several stochastic object.

We first consider the stochastic convolution $\Psi$ as defined in \eqref{Psi}, which we now provide a more precise definition. By fixing an orthonormal basis $\{ e_k \}_{k \in \N}$ of $L^2 (\R^d)$, we define $W$ as the following cylindrical Wiener process on $L^2 (\R^d)$:
\begin{align*}
W (t) := \sum_{k \in \N} \be_k (t) e_k.
\end{align*}

\noi
Here, $\be_k$ is defined by $\be_k (t) = \langle \xi, \ind_{[0, t]} \cdot e_k \rangle_{t, x}$, where $\langle \cdot, \cdot \rangle_{t, x}$ denotes the duality pairing on $\R_+ \times \R^d$. As a result, $\{ \be_k \}_{k \in \N}$ is a family of mutually independent real-valued Brownian motions. 
The stochastic convolution $\Psi$ is then given by
\begin{align}
\Psi (t) = \int_0^t S(t - t') \phi dW(t') = \sum_{k \in \N} \int_0^t S(t - t') \phi e_k d \beta_k (t'),
\label{Psi2}
\end{align} 

\noi
where $S$ is the Fourier multiplier as defined in \eqref{defS}.

We now show the following lemma, which establishes the regularity property of $\Psi$. 
\begin{lemma}
\label{LEM:Psi}
Let $d \geq 1$ and $T > 0$. Suppose that $\phi \in \HS (L^2 (\R^d), H^s (\R^d))$ for some $s \in \R$.

\smallskip \noi
\textup{(i)} We have $\Psi \in C ([0, T]; H^{s + 1} (\R^d))$ almost surely. Moreover, for any finite $p \geq 1$, we have
\begin{align*}
\E \Big[ \sup_{0 \leq t \leq T} \| \Psi (t) \|_{H^{s + 1} (\R^d)}^p \Big] \leq C \| \phi \|_{\HS (L^2, H^s)}^p
\end{align*} 

\noi
for some constant $C = C(T, p) > 0$.

\smallskip \noi
\textup{(ii)} Let $1 \leq q < \infty$, $d \geq 3$, and $r_d = \frac{2 (d + 2)}{d - 2}$. Let $\g \in (0, 1)$ such that $1 - \g \leq \min (\frac{1}{d + 2}, \frac{6 - d}{2 (d + 2)})$. Then, we have $\Psi \in L^q ([0, T]; W^{s + 1 - \g, r_d} (\R^d))$ almost surely. Moreover, for any finite $p \geq 1$, we have
\begin{align}
\E \Big[ \| \Psi \|_{L_T^q W_x^{s + 1 - \g, r_d} (\R^d)}^p \Big] \leq C \| \phi \|_{\HS (L^2, H^s)}^p
\label{reg_goal}
\end{align}

\noi
for some constant $C = C(T, p) > 0$.
\end{lemma}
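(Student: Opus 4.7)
The plan is to handle (i) by a direct It\^o-isometry computation together with standard upgrades, and (ii) by using Gaussian hypercontractivity to reduce to a purely deterministic Strichartz bound for the wave propagator.

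For part (i), by It\^o's isometry applied to \eqref{Psi2} and the mutual independence of $\{\be_k\}$,
\begin{align*}
\E\|\Psi(t)\|_{H^{s+1}}^2 = \sum_{k} \int_0^t \|S(t-t')\phi e_k\|_{H^{s+1}}^2\, dt'.
\end{align*}
On the Fourier side, the bound $|\sin(\tau|\xi|)|\leq \min(\tau|\xi|,1)$ gives $\jb{\xi}^{2(s+1)}|\sin(\tau|\xi|)|^2/|\xi|^2 \les (1+\tau^2)\jb{\xi}^{2s}$ after splitting into $|\xi|\leq 1$ and $|\xi|>1$, hence $\|S(\tau)f\|_{H^{s+1}}\les (1+\tau)\|f\|_{H^s}$. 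Summing and integrating bounds the above by $C(T)\|\phi\|_{\HS(L^2,H^s)}^2$. Higher $L^p_\omega$-moments and the supremum over $t\in[0,T]$ follow from the Burkholder--Davis--Gundy inequality in the Hilbert space $H^{s+1}(\R^d)$, and almost sure continuity in $H^{s+1}$ is obtained either by the Da Prato--Zabczyk factorization method or by Kolmogorov's criterion applied to the Gaussian increments $\Psi(t)-\Psi(s)$, whose covariance is explicit.

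For part (ii), the plan is a three-step reduction: Minkowski plus Wiener-chaos hypercontractivity, It\^o's isometry, and a deterministic Strichartz bound. By H\"older in $\Omega$ it suffices to prove \eqref{reg_goal} for $p\geq \max(q,r_d)$, whence Minkowski's integral inequality gives
\begin{align*}
\E\|\Psi\|_{L^q_T W^{s+1-\gamma,r_d}_x}^p \leq \Big\|\|\jb{\nabla}^{s+1-\gamma}\Psi(t,x)\|_{L^p(\Omega)}\Big\|_{L^q_T L^{r_d}_x}^p.
\end{align*}
Since $\jb{\nabla}^{s+1-\gamma}\Psi(t,x)$ is a centred Gaussian (first Wiener chaos), Nelson's hypercontractivity yields $\|\cdot\|_{L^p(\Omega)} \les \sqrt{p}\,\|\cdot\|_{L^2(\Omega)}$, and It\^o's isometry identifies
\begin{align*}
g(t,x) := \E|\jb{\nabla}^{s+1-\gamma}\Psi(t,x)|^2 = \sum_k \int_0^t |\jb{\nabla}^{s+1-\gamma}S(t-t')\phi e_k(x)|^2\, dt'.
\end{align*}
Thus \eqref{reg_goal} reduces to showing $\|g^{1/2}\|_{L^q_T L^{r_d}_x} \les \|\phi\|_{\HS(L^2,H^s)}$.

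For the deterministic bound, set $\sigma := s+1-\gamma$. Substituting $\tau = t - t'$, enlarging $[0,t]$ to $[0,T]$, and applying Minkowski in $L^{r_d/2}_x$ (valid since $r_d \geq 2$) yield
\begin{align*}
\|g(t,\cdot)\|_{L^{r_d/2}_x} \leq \sum_k \|\jb{\nabla}^{\sigma}S(\cdot)\phi e_k\|_{L^2_T L^{r_d}_x}^2
\end{align*}
uniformly in $t\in[0,T]$. I now invoke Lemma~\ref{LEM:str} at a $\dot H^{\gamma_0}$-wave admissible pair of the form $(\tilde q, r_d)$ with $\tilde q\geq 2$, combined with H\"older in time to reach the slot $L^2_T$. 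Two natural choices of $\tilde q$ --- namely $\tilde q = 2$ itself (available when $d\geq 4$) and $\tilde q$ saturating the sharp admissibility $\frac{1}{\tilde q}+\frac{d-1}{2r_d}=\frac{d-1}{4}$ --- yield exactly the two thresholds $\frac{6-d}{2(d+2)}$ and $\frac{1}{d+2}$ in the hypothesis on $\gamma$. Commuting $\jb{\nabla}^{\sigma}$ past $S$, the Strichartz estimate gives $\|\jb{\nabla}^{\sigma}S(\cdot)\phi e_k\|_{L^{\tilde q}_T L^{r_d}_x} \les \|\phi e_k\|_{H^{\sigma+\gamma_0-1}}$, and the hypothesis on $\gamma$ is precisely what guarantees $\sigma + \gamma_0 - 1 \leq s$, so that $\|\phi e_k\|_{H^{\sigma+\gamma_0-1}}\leq\|\phi e_k\|_{H^s}$. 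Summing via $\sum_k\|\phi e_k\|_{H^s}^2 = \|\phi\|_{\HS(L^2,H^s)}^2$ and then taking $L^{q/2}_T$ (which contributes only a factor of $T^{2/q}$ thanks to the uniformity in $t$) closes the estimate.

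The main obstacle is the mismatch between the $L^2_{t'}$ integrability forced by It\^o's isometry and the $L^{q_0}_T$ integrability of the energy-critical ($\dot H^1$-admissible) Strichartz pair $(q_0, r_d)$ with $q_0 = \frac{d+2}{d-2}$. A loss of $1-\gamma$ derivatives is the price of moving along the Strichartz admissibility line to a pair whose time integrability matches the $L^2_{t'}$ slot; the two thresholds in the hypothesis reflect the two regimes of how this price is distributed between the derivative loss $1-\gamma_0$ and the loss coming from H\"older in time.
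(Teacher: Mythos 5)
Your architecture for part (ii) — Minkowski's integral inequality to move $L^p(\Omega)$ innermost, Gaussian (Wiener-chaos) hypercontractivity to drop to $L^2(\Omega)$, It\^o's isometry to produce a deterministic object, and a Strichartz bound on $S(\cdot)\phi e_k$ — is exactly the paper's. Your identification of the two thresholds $\frac{1}{d+2}$ and $\frac{6-d}{2(d+2)}$ with, respectively, the Knapp admissibility constraint and the constraint $\tilde q \geq 2$ needed to H\"older down to $L^2_\tau$ is also correct.

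There is, however, a genuine gap at the step where you assert
\begin{align*}
\big\| \jb{\nabla}^{\sigma} S(\cdot)\,\phi e_k \big\|_{L^{\tilde q}_T L^{r_d}_x} \les \| \phi e_k \|_{H^{\sigma + \gamma_0 - 1}}.
\end{align*}
Lemma~\ref{LEM:str} controls the left side only by the \emph{homogeneous} norm
\begin{align*}
\| \jb{\nabla}^\sigma \phi e_k \|_{\dot H^{\gamma_0 - 1}} = \big\| |\nabla|^{\gamma_0 - 1}\jb{\nabla}^{\sigma} \phi e_k \big\|_{L^2},
\end{align*}
and since $\gamma_0 < 1$ the symbol $|\xi|^{\gamma_0 - 1}\jb{\xi}^{\sigma}$ blows up as $\xi \to 0$ while $\jb{\xi}^{\sigma+\gamma_0-1}$ stays bounded there. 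Taking $\ft f = \ind_{\{|\xi|\leq \eps\}}$ shows that $\|\jb{\nabla}^\sigma f\|_{\dot H^{\gamma_0-1}} / \|f\|_{H^{\sigma+\gamma_0-1}} \to \infty$ as $\eps \to 0$, so the claimed inequality is false; more to the point, $\phi e_k \in H^s$ gives no control over this homogeneous negative-regularity norm near the zero frequency, because $\ft{\phi e_k}$ need not vanish there. This is precisely why the paper inserts a Littlewood--Paley split $\phi e_k = P_{\leq 1}(\phi e_k) + P_{>1}(\phi e_k)$ before estimating. On the high-frequency piece $|\nabla| \sim \jb{\nabla}$, so your Strichartz argument closes with the inhomogeneous $H^s$-norm. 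On the low-frequency piece one does \emph{not} use Strichartz; instead one exploits the elementary bound $|\sin(\tau|\xi|)|/|\xi| \leq |\tau|$ (which cancels the $1/|\nabla|$ singularity outright) together with Bernstein's inequality to pass from $L^{r_d}_x$ to $L^2_x$. You should incorporate this frequency split; as written, your deterministic reduction cannot be closed. (A minor additional point: the paper runs the argument with the finite-rank truncations $\phi_{\leq K}$ and passes to the limit $K \to \infty$, so that each It\^o-isometry and interchange-of-norms step is performed on a bona fide $L^2$-valued martingale; your plan does not address this but it is a routine approximation.)
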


\begin{proof}
For part (i), see \cite{DPZ}. For part (ii), the proof is similar to \cite[Lemma 2.1 (ii)]{OPW} with some additional cares. We define $\phi_{\leq K}$ be such that
\begin{align*}
\phi_{\leq K} e_k = 
\begin{cases}
\phi e_k & \text{ if } k \leq K \\
0 & \text{ if } k > K,
\end{cases}
\end{align*}

\noi
and we define $\Psi_{\leq K}$ be as in \eqref{Psi2} with $\phi$ replaced by $\phi_{\leq K}$. Note that $\phi_{\leq K}$ converges to $\phi$ in $\HS (L^2, H^s)$ as $K \to \infty$. We fix $1 \leq q < \infty$ and choose $\wt q = \wt q (d, \g) \geq 2$ such that $(\wt q, r_d)$ is $\dot{H}^\g$-wave admissible, which can be satisfied given $1 - \g \leq \min (\frac{1}{d + 2}, \frac{6 - d}{2 (d + 2)})$. Then, for $p \geq \max(q, r_d)$, by Minkowski's integral inequality, the Gaussian hypercontractivity, and the Ito isometry, we have
\begin{align}
\begin{split}
\big\| \| \Psi_{\leq K} \|_{L_T^q W_x^{s + 1 - \g, r_d}} \big\|_{L^p (\O)} &\leq \big\| \| \jb{\nabla}^{s + 1 - \g} \Psi_{\leq K} \|_{L^p (\O)} \big\|_{L_T^q L_x^{r_d}} \\
&\les \big\| \| \jb{\nabla}^{s + 1 - \g} \Psi_{\leq K} \|_{L^2 (\O)} \big\|_{L_T^q L_x^{r_d}} \\
&= \big\| \| S(\tau) \jb{\nabla}^{s + 1 - \g} \phi_{\leq K} e_k \|_{\l_k^2 L_\tau^2 ([0, t])} \big\|_{L_T^q L_x^{r_d}} \\
&\leq \big\| \| S(\tau) \jb{\nabla}^{s + 1 - \g} P_{\leq 1} (\phi_{\leq K} e_k) \|_{\l_k^2 L_\tau^2 ([0, t])} \big\|_{L_T^q L_x^{r_d}} \\
&\quad + \big\| \| S(\tau) \jb{\nabla}^{s + 1 - \g} P_{> 1} (\phi_{\leq K} e_k) \|_{\l_k^2 L_\tau^2 ([0, t])} \big\|_{L_T^q L_x^{r_d}},
\end{split}
\label{reg1}
\end{align}

\noi
where $P_{\leq 1}$ denotes the frequency projector onto $\{|\xi| \leq 1\}$ and $P_{> 1} = \Id - P_{\leq 1}$. For the low frequency part, we apply Minkowski's integral inequality, Bernstein's inequality, and the fact that $\frac{\sin (\tau |\xi|)}{|\xi|} \leq |\tau|$ to obtain
\begin{align}
\begin{split}
\big\| &\| S(\tau) \jb{\nabla}^{s + 1 - \g} P_{\leq 1} (\phi_{\leq K} e_k) \|_{\l_k^2 L_\tau^2 ([0, t])} \big\|_{L_T^q L_x^{r_d}} \\
&\leq T^{\frac 1q} \big\| \| S(\tau) \jb{\nabla}^{s + 1 - \g} P_{\leq 1} (\phi_{\leq K} e_k) \|_{L_x^{r_d}} \big\|_{\l_k^2 L_\tau^2 ([0, T])} \\
&\les T^{\frac 1q} \big\| \| S(\tau) \jb{\nabla}^{s + 1 - \g} P_{\leq 1} (\phi_{\leq K} e_k) \|_{L_x^{2}} \big\|_{\l_k^2 L_\tau^2 ([0, T])} \\
&\les T^\ta \big\| \| \jb{\nabla}^s \phi_{\leq K} e_k \|_{L_x^2} \big\|_{\l_k^2} \\
&= T^\ta \| \phi_{\leq K} \|_{\HS (L^2, H^s)}.
\end{split}
\label{reg2}
\end{align}

\noi
for some $\ta > 0$. For the high frequency part, by Minkowski's integral inequality, H\"older's inequality in $\tau$, and Lemma \ref{LEM:str}, we obtain
\begin{align}
\begin{split}
\big\| &\| S(\tau) \jb{\nabla}^{s + 1 - \g} P_{> 1} (\phi_{\leq K} e_k) \|_{\l_k^2 L_\tau^2 ([0, t])} \big\|_{L_T^q L_x^{r_d}} \\
&\leq T^{\ta} \big\| \| S(\tau) \jb{\nabla}^{s + 1 - \g} P_{> 1} (\phi_{\leq K} e_k) \|_{L_\tau^{\wt q} ([0, T]; L_x^{r_d})} \big\|_{\l_k^2} \\
&\les T^{\ta} \big\| \| |\nabla|^{-1 + \g} \jb{\nabla}^{s + 1 - \g} P_{> 1} (\phi_{\leq K} e_k) \|_{L^2_x} \big\|_{\l_k^2} \\
&\les T^\ta \| \phi_{\leq K} \|_{\HS (L^2, H^s)}.
\end{split}
\label{reg3}
\end{align}

\noi
Combining \eqref{reg1}, \eqref{reg2}, and \eqref{reg3}, we obtain
\begin{align*}
\big\| \| \Psi_{\leq K} \|_{L_T^q W_x^{s + 1 - \g, r_d}} \big\|_{L^p (\O)} \les T^\ta \| \phi_{\leq K} \|_{\HS (L^2, H^s)}.
\end{align*}

\noi
Similarly, we obtain that for $K_1, K_2 \in \N$ with $K_1 < K_2$,
\begin{align*}
\big\| \| \Psi_{\leq K_2} - \Psi_{\leq K_1} \|_{L_T^q W_x^{s + 1 - \g, r_d}} \big\|_{L^p (\O)} \les T^\ta \| \phi_{\leq K_2} - \phi_{\leq K_1} \|_{\HS (L^2, H^s)} \too 0
\end{align*}

\noi
as $K_1, K_2 \to \infty$. This then implies the convergence and \eqref{reg_goal} in part (ii).
\end{proof}

\begin{remark} \rm
\label{RMK:dtPsi}
One can use integration by parts to write
\begin{align*}
\Psi (t) = - \sum_{k \in \N} \int_0^t \beta_k (t') \frac{d}{ds} \Big|_{s = t'} \big( S(t - s) \phi e_k \big) dt'
\end{align*}

\noi
almost surely. This allows us to compute that
\begin{align*}
\dt \Psi (t) = \sum_{k \in \N} \int_0^t \dt S (t - t') \phi e_k d \beta_k (t')
\end{align*}

\noi
almost surely. As in Lemma \ref{LEM:Psi} (i), given $d \geq 1$, $T > 0$, and $\phi \in \HS (L^2, H^s)$ for some $s \in \R$, we can show that $\dt \Psi \in C ([0, T]; H^s (\R^d))$ almost surely and
\begin{align*}
\E \Big[ \sup_{0 \leq t \leq T} \| \dt \Psi (t) \|_{H^{s}}^p \Big] \leq C \| \phi \|_{\HS (L^2, H^s)}^p
\end{align*}

\noi
for any finite $p \geq 1$.
\end{remark}

\medskip
We now consider the stochastic convolution $\Psi$ in \eqref{Psi} on $\T^d$, which we denote by $\Phi := \Psi$ to avoid confusion. We recall that in the $\T^d$ setting, we assume that $\phi$ is a Fourier multiplier operator on $L^2 (\T^d)$: for all $n \in \Z^d$, we have
\begin{align*}
\phi e^{in \cdot x} = \ft \phi_n e^{in \cdot x}
\end{align*}

\noi
for some $\ft \phi_n \in \C$.
Thus, $\Phi$ is given by
\begin{align}
\Phi (t) = \int_0^t S_{\T^d} (t - t') \phi dW(t') = \sum_{n \in \Z^d} \int_0^t S_{\T^d} (t - t') (\ft \phi_n e^{in \cdot x}) d \beta_n (t').
\label{defPhi}
\end{align}

\noi
Here, the operator $S_{\T^d}$ has the same form as $S$ in \eqref{defS} but on the periodic domain $\T^d$ and $\be_n$ is now defined by $\be_n (t) = \langle \xi, \ind_{[0, t]} \cdot e^{in \cdot x} \rangle_{t, x}$, where $\langle \cdot, \cdot \rangle_{t, x}$ denotes the duality pairing on $\R_+ \times \T^d$. 

For later purposes of showing global well-posedness in the $\T^d$ setting, we also need a truncated periodized version of $\Phi$.
Let $\eta : \R^d \to [0, 1]$ be a smooth cutoff function such that $\eta \equiv 1$ on $[-2 \pi, 2 \pi]$ and $\eta \equiv 0$ outside of $[-4 \pi, 4 \pi]$. Given $R \geq 1$, we define
\begin{align}
\eta_R (x) = \eta \Big( \frac{x}{R} \Big).
\label{etaR}
\end{align}

\noi
Given $R \geq 1$, we define the following stochastic convolution on $\R^d$:
\begin{align}
\mathbf{\Phi}_R (t) &:= \sum_{n \in \Z^d} \int_0^t S(t - t') \big( \eta_R (x) \ft \phi_n e^{in \cdot x} \big) d \be_n (t'). \label{defPhiR}
\end{align}
Also, in view of Remark \ref{RMK:dtPsi}, we also also take a time derivative and obtain
\begin{align*}
\dt \mathbf{\Phi}_R (t) &:= \sum_{n \in \Z^d} \int_0^t \dt S(t - t') \big( \eta_R (x) \ft \phi_n e^{in \cdot x} \big) d \be_n (t').
\end{align*}

We now state the following lemma regarding the regularity of the above stochastic objects.
\begin{lemma}
\label{LEM:Phi}
Let $d \geq 1$, $T > 0$, and $\eps > 0$. 
Suppose that $\phi$ is a Fourier multiplier operator on $L^2 (\T^d)$ such that $\phi \in \HS (L^2 (\T^d), H^s (\T^d))$ for some $s > -1$. 

\smallskip \noi
\textup{(i)} 
Let $1 \leq r \leq \infty$. Then, we have $\Phi \in C([0, T]; W^{s + 1 - \eps, r} (\T^d))$ almost surely. More precisely, there exists $C = C(\o, \| \phi \|_{\HS (L^2, H^s)}, T) > 0$ such that
\begin{align*}
\| \Phi (t) \|_{L_T^\infty W_x^{s + 1 - \eps, r} (\T^d)} \leq C.
\end{align*}

\smallskip \noi
\textup{(ii)}
Let $1 \leq q < \infty$ and $2 \leq r < \infty$. Then, for any $R \geq 1$, we have almost surely $\mathbf{\Phi}_R \in L^q([0, T]; W^{s + 1, r} (\R^d))$, $\mathbf{\Phi}_R \in C([0, T]; W^{s + 1 - \eps, \infty} (\R^d))$, and $\dt \mathbf{\Phi}_R \in C([0, T]; W^{s - \eps, \infty} (\R^d))$. More precisely, there exists $C = C(\o, \| \phi \|_{\HS (L^2, H^s)}, R, T) > 0$ such that 
\begin{align}
\| \mathbf{\Phi}_R \|_{L^q_T W_x^{s + 1, r} (\R^d)} &\leq C, \label{Phi1} \\
\| \mathbf{\Phi}_R \|_{L^\infty_T W_x^{s + 1 - \eps, \infty} (\R^d)} &\leq C, \label{Phi2} \\
\| \dt \mathbf{\Phi}_R \|_{L^\infty_T W_x^{s - \eps, \infty} (\R^d)} &\leq C. \label{Phi3}
\end{align}
\end{lemma}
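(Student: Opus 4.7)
Following the same scheme as Lemma \ref{LEM:Psi}, I would combine the Ito isometry, Gaussian hypercontractivity, and Strichartz/Sobolev-type inequalities, exploiting the Fourier-multiplier structure $\phi e^{in\cdot x}=\ft\phi_n e^{in\cdot x}$. Setting $\sigma=s+1-\eps/2$, I start from the pointwise Ito isometry
\[
\E\big[|\jb{\nabla}^{\sigma}\Phi(t,x)|^2\big]=\sum_{n\in\Z^d}\jb{n}^{2\sigma}|\ft\phi_n|^2\int_0^t\Big|\frac{\sin((t-t')|n|)}{|n|}\Big|^2 dt',
\]
with the $n=0$ mode handled separately and contributing an $O(T^3)$ term. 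Using $|\sin(\tau|n|)/|n||\leq\min(T,\jb{n}^{-1})$, this is controlled by $CT\|\phi\|_{\HS(L^2,H^s)}^2$ uniformly in $(t,x)$. Gaussian hypercontractivity will upgrade to $L^p(\Omega)$ for any $p\geq 2$, and Minkowski's inequality controls $\|\Phi(t)\|_{L^p(\Omega)W^{\sigma,r}_x(\T^d)}$ for finite $r$; the endpoint $r=\infty$ is reached via the Sobolev embedding $W^{\sigma,p}(\T^d)\embeds W^{s+1-\eps,\infty}(\T^d)$ for $p$ sufficiently large. Time continuity in $W^{s+1-\eps,r}(\T^d)$ then follows from Kolmogorov's criterion applied to $\Phi(t_2)-\Phi(t_1)$, after running the analogous Ito calculation for the increment.

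\textbf{Part (ii).} The scheme is analogous but now exploits the spatial localization of $\eta_R$. Since $\ft{\eta_R}(\xi)=R^d\ft\eta(R\xi)$ concentrates near $\xi=0$, the mode $\eta_R e^{in\cdot y}$ is frequency-localized at $n$. After a change of variables $\zeta=R(\xi-n)$ in the Fourier representation of $S(\tau)\jb{\nabla}^{s+1}(\eta_R e^{in\cdot y})(x)$, the resulting integral localizes to $|\zeta|\les 1$ where $|\xi|\sim\jb{n}$, and combined with $|\sin(\tau|\xi|)/|\xi||\les\jb{n}^{-1}$ this yields the pointwise bound
\[
\big|S(\tau)\jb{\nabla}^{s+1}(\eta_R e^{in\cdot y})(x)\big|\les\jb{n}^s,
\]
uniformly in $\tau\in[0,T]$, $x\in\R^d$, and $R\geq 1$. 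The Ito isometry then gives $\|\jb{\nabla}^{s+1}\mathbf{\Phi}_R(t,x)\|_{L^p(\Omega)}\les T^{1/2}\|\phi\|_{\HS(L^2,H^s)}$ uniformly in $(t,x)$. By finite speed of propagation $\mathbf{\Phi}_R(t,\cdot)$ is supported in $\{|x|\les R+T\}$, and Paley--Wiener-type exponential decay of the Bessel kernel for $\jb{\nabla}^{s+1}$ shows $\jb{\nabla}^{s+1}\mathbf{\Phi}_R(t,\cdot)$ decays rapidly outside this ball; combined with hypercontractivity and Minkowski, this yields \eqref{Phi1}. The bound \eqref{Phi2} then follows from the pointwise $x$-control together with Kolmogorov's criterion in $t$, the $\eps$ loss absorbing the continuity cost. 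For \eqref{Phi3}, I differentiate in $t$: the propagator becomes $\cos((t-t')|\nabla|)$, which lacks the $|\nabla|^{-1}$ gain of $S(\tau)$, so the analogous analysis produces a bound on $\jb{\nabla}^{s}\dt\mathbf{\Phi}_R$ (instead of on $\jb{\nabla}^{s+1}\mathbf{\Phi}_R$), and the rest of the argument proceeds as for \eqref{Phi2}.

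The main obstacle will be the sharp frequency-localized pointwise estimate in Part (ii): a naive Strichartz-based argument (as in the proof of Lemma \ref{LEM:Psi}(ii)) would cost a derivative loss $\gamma>0$ and force the stronger hypothesis $\phi\in\HS(L^2,H^{s+\gamma})$, which is unavailable. Carefully exploiting the concentration of $\ft{\eta_R}$ near the origin together with the $|\nabla|^{-1}$ gain of the sine propagator at frequencies $\sim\jb{n}$ is what should produce the sharp estimate consistent with the assumption $\phi\in\HS(L^2,H^s)$.
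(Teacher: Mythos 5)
Your part (i) sketch is essentially the standard Ito-isometry/hypercontractivity/Kolmogorov argument and is consistent with the references the paper cites; nothing to flag there.

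For part (ii), you rely on the same central observation as the paper — that $\ft{\eta_R}(\,\cdot - n)$ is concentrated near frequency $n$, so that the factor $\sin(\tau|\xi|)/|\xi|$ behaves like $\jb{n}^{-1}$ and yields the full derivative gain without invoking Strichartz (which would cost $\gamma > 0$ and demand more regularity on $\phi$, as you correctly identify). The execution, however, is genuinely different. The paper never passes through a pointwise-in-$x$ estimate: it bounds the $L^r_x$ norm of each frequency mode directly by Hausdorff--Young (for $r \ge 2$), using the elementary two-sided estimate $\jb{\xi}^{s+1}|\sin(t|\xi|)|/|\xi| \lesssim \max(1,t)\,\jb{\xi-n}^{|s|}\jb{n}^s$ followed by H\"older to trade $L^{r'}_\xi$ for a weighted $L^2_\xi$, producing the explicit constant $\|\eta_R\|_{H^{|s|+d/2}}$. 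This is a short, self-contained chain. Your route instead derives a pointwise bound $|S(\tau)\jb{\nabla}^{s+1}(\eta_R e^{in\cdot y})(x)| \lesssim \jb{n}^s$ uniformly in $x$ and then recovers the $L^r_x$ control by combining finite speed of propagation with rapid decay of $\jb{\nabla}^{s+1}$ applied to a compactly supported function. That can be made to work, but it is a more roundabout path and hides a nontrivial step you have not addressed: the decay of $\jb{\nabla}^{s+1}S(\tau)(\eta_R e^{in\cdot y})(x)$ outside the light cone must be shown to have the \emph{same} $\jb{n}^s$ prefactor as the bulk estimate (uniformly in $n$), otherwise the subsequent sum $\sum_n |\ft\phi_n|^2(\cdots)$ will not close under $\phi \in \HS(L^2,H^s)$. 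This does hold — repeated integration by parts in $\zeta$ against the symbol $m_n(\zeta) = \jb{n+\zeta}^{s+1}\sin(\tau|n+\zeta|)/|n+\zeta|$ keeps the $\jb{n}^s$ scale because each $\zeta$-derivative of $\jb{n+\zeta}^{s+1}$ loses a power of $\jb{n}$ while $\sin/|\cdot|$ already contributes $\jb{n}^{-1}$ — but it requires its own computation. Also note that invoking "the Bessel kernel for $\jb{\nabla}^{s+1}$" is slightly loose: since $s+1 > 0$ there is no integrable Bessel kernel, and one needs to factor through $(1-\Delta)^k \jb{\nabla}^{s+1-2k}$ or argue via the Fourier-side integration by parts just described. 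In short: your approach is sound in conception and buys nothing extra here; the paper's Hausdorff--Young route is the cleaner way to turn the same frequency-localization observation into the $L^r_x$ estimate.
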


\begin{proof}
The proof of (i) follows from straightforward modifications of  \cite[Lemma 3.1]{GKO2} or \cite[Proposition 2.1]{GKO}.

For (ii), we first consider \eqref{Phi1}. We note that for any $s > -1$, $t > 0$, and $n \in \Z^d$, we have
\begin{align}
\jb{\xi}^{s + 1} \bigg| \frac{\sin (t |\xi|)}{|\xi|} \bigg| \les \max (1, t) \jb{\xi}^{s} \leq \max(1, t) \jb{\xi - n}^{|s|} \jb{n}^{s}.
\label{sin}
\end{align}

\noi
Using \eqref{sin}, Hausdorff-Young's inequality, and H\"older's inequality, we obtain
\begin{align}
\begin{split}
\Big\| &\jb{\nb}^{s + 1} \frac{\sin (t |\nb|)}{|\nb|} (\eta_R e^{in \cdot x}) \Big\|_{L_x^r (\R^d)} \\
&\leq \max(1, t) \jb{n}^{s} \big\| \jb{\xi - n}^{|s|} \ft \eta_R (\xi - n) \|_{L_\xi^{r'} (\R^d)} \\
&\les \max(1, t) \jb{n}^{s} \big\| \jb{\xi - n}^{|s| + \frac{d}{2}} \ft \eta_R (\xi - n) \|_{L_\xi^{2} (\R^d)} \\
&\les \max(1, t) \jb{n}^{s} \| \eta_R \|_{H^{|s| + \frac d2} (\R^d)}.
\end{split}
\label{sin2}
\end{align}
Thus, letting $p \geq \max (q, r)$, by Minkowski's integral inequality, the Gaussian hypercontractivity, the Ito isometry, Minkowski's integral inequality again, and \eqref{sin2}, we can compute
\begin{align*}
\big\| \| &\mathbf{\Phi}_R \|_{L_T^q W_x^{s + 1 , r} (\R^d) } \big\|_{L^p (\O)} \\
&\leq \big\| \| \jb{\nb}^{s + 1} \mathbf{\Phi}_R \|_{L^p (\O)} \big\|_{L^q_T L_x^r (\R^d)} \\
&\les \big\| \| \jb{\nb}^{s + 1} \mathbf{\Phi}_R \|_{L^2 (\O)} \big\|_{L^q_T L_x^r (\R^d)} \\
&= \bigg\|  \Big\| \jb{\nb}^{s + 1} \frac{\sin (t' |\nb|)}{|\nb|} \big( \eta_R \ft \phi_n e^{in \cdot x} \big) \Big\|_{\l_n^2 L_{t'}^2 ([0, t])} \bigg\|_{L^q_T L_x^r (\R^d)} \\
&\les T^{\frac 1q} \bigg\| \Big\| \jb{\nb}^{s + 1} \frac{\sin (t' |\nb|)}{|\nb|} \big( \eta_R e^{in \cdot x} \big) \Big\|_{L_x^r (\R^d)} \cdot \ft \phi_n \bigg\|_{\l_n^2 L_{t'}^2 ([0, T])} \\
&\les T^{\frac 12 + \frac 1q} \| \eta_R \|_{H^{|s| + \frac d2} (\R^d)} \big\| \jb{n}^s \ft \phi_n \big\|_{\l_n^2} \\
&= T^{\frac 12 + \frac 1q} \| \eta_R \|_{H^{|s| + \frac d2} (\R^d)} \| \phi \|_{\HS (L^2, H^s)}.
\end{align*}

\noi
This shows the desired estimate \eqref{Phi1} and the fact that $\mathbf{\Phi}_R$ lies in $L^q ([0, T]; W^{s + 1 , r} (\R^d))$ almost surely.

For the estimate \eqref{Phi2}, we can apply the Sobolev embedding $L^\infty (\R^d) \embeds W^{\eps, r_1} (\R^d)$ with $0 < \eps \ll 1$ and $2 \leq r_1 < \infty$ satisfying $\eps r_1 > d$ and then use the above steps to obtain the desired result. The fact that $\mathbf{\Phi}_R$ is continuous in time follows from adapting the above modifications to the proof of \cite[Proposition 2.1]{GKO} using Kolmogorov's continuity criterion. The steps for establishing the estimate \eqref{Phi3} are similar.
\end{proof}

\section{Local well-posedness of the energy-critical stochastic NLW}
\label{SEC:LWP}
In this section, we briefly go over local well-posedness of the defocusing energy-critical SNLW \eqref{SNLW} on $\R^d$. We first show the following local well-posedness result in a slightly more general setting. Recall that the operator $V(t)$ is as defined in \eqref{defS} and the $X(I)$-norm is as defined in \eqref{defX}.
\begin{proposition}
\label{PROP:LWP}
Let $d \geq 3$ and $(u_0, u_1) \in \dot{\H}^1 (\R^d)$. Then, there exists $0 < \eta \ll 1$ such that if 
\begin{align}
\| V(t - t_0) (u_0, u_1) \|_{X(I)} \leq \eta \quad \text{and} \quad \| f \|_{X(I)} \leq \eta
\label{cond1}
\end{align}

\noi
for some time interval $I = [t_0, t_1] \subset \R$, then the Cauchy problem
\begin{align}
\begin{cases}
\dt^2 v - \Dl v + \NN (v + f) = 0 \\
(v, \dt v)|_{t = t_0} = (u_0, u_1)
\end{cases}
\label{NLWvf}
\end{align}

\noi
with $\NN(u) = |u|^{\frac{4}{d - 2}} u$ admits a unique solution $(v, \dt v) \in C(I; \dot{\H}^1 (\R^d))$, which satisfies
\begin{align*}
\| v \|_{X(I)} \leq 3 \eta.
\end{align*} 
Here, the uniqueness of $v$ holds in the set
\begin{align*}
\{ v \in X(I): \| v \|_{X(I)} \leq 3 \eta \}.
\end{align*}

\end{proposition}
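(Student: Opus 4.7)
The plan is a standard contraction mapping argument in the Strichartz space $X(I)$ applied to the Duhamel formulation of \eqref{NLWvf}. Concretely, I would consider the map
\begin{align*}
\Gamma v(t) := V(t - t_0)(u_0, u_1) - \int_{t_0}^t S(t - t')\,\NN(v + f)(t')\, dt'
\end{align*}
and look for a fixed point in the closed ball $B := \{v \in X(I): \|v\|_{X(I)} \leq 3\eta\}$, viewed as a complete metric space under the $X(I)$-distance. Uniqueness in the ball will come directly from the contraction, and the regularity $(v, \dt v) \in C(I; \dot{\H}^1(\R^d))$ will be recovered at the end by one further application of the Strichartz estimate.

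The nonlinear heart of the matter is to apply Lemma \ref{LEM:str} with a carefully chosen pair of admissible exponents. On the left I would take the $\dot H^1$-wave admissible pair $(q,r) = (\frac{d+2}{d-2}, \frac{2(d+2)}{d-2})$, which is precisely the one defining $X(I)$; to absorb the forcing I would pair it with the $\dot H^0$-wave admissible pair $(\wt q, \wt r) = (\infty, 2)$, whose dual is $(1,2)$. This yields
\begin{align*}
\|\Gamma v\|_{X(I)} \leq \|V(t - t_0)(u_0, u_1)\|_{X(I)} + C \|\NN(v+f)\|_{L^1_I L^2_x}.
\end{align*}
A direct H\"older computation, using $|\NN(w)| \les |w|^{\frac{d+2}{d-2}}$ and the fact that the space-time exponents match exactly (this being the defining feature of the energy-critical setting), gives
\begin{align*}
\|\NN(v+f)\|_{L^1_I L^2_x} \les \|v+f\|_{X(I)}^{\frac{d+2}{d-2}}.
\end{align*}
Combined with the smallness hypotheses \eqref{cond1}, this produces $\|\Gamma v\|_{X(I)} \leq \eta + C(4\eta)^{\frac{d+2}{d-2}} \leq 3\eta$ once $\eta$ is chosen small enough, so $\Gamma$ sends $B$ into itself.

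For the contraction estimate, I would use the pointwise bound $|\NN(a) - \NN(b)| \les (|a|^{\frac{4}{d-2}} + |b|^{\frac{4}{d-2}})|a - b|$ together with H\"older in space and time (with exponents paired exactly as above) to obtain
\begin{align*}
\|\Gamma v_1 - \Gamma v_2\|_{X(I)} \les \big(\|v_1\|_{X(I)} + \|v_2\|_{X(I)} + \|f\|_{X(I)}\big)^{\frac{4}{d-2}} \|v_1 - v_2\|_{X(I)} \les \eta^{\frac{4}{d-2}} \|v_1 - v_2\|_{X(I)}.
\end{align*}
For $\eta$ sufficiently small, this is a strict contraction on $B$, so Banach's fixed point theorem furnishes a unique $v \in B$. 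To recover the energy-space regularity, I would reapply Lemma \ref{LEM:str} with $\gamma = 1$ and the $\dot H^1$-wave admissible pair $(q,r) = (\infty, 2)$ on the left, using that $\NN(v+f)$ is already known to lie in $L^1_I L^2_x$; this yields $v \in C(I; \dot H^1)$ and $\dt v \in C(I; L^2)$. There is no genuine analytic obstacle: the argument is routine modulo the careful choice of Strichartz pairs. The only mild subtlety is that the energy-critical exponents leave no slack whatsoever, so the nonlinear estimate must close \emph{exactly} at the dual Strichartz pair $(1,2)$, with no favorable power of $|I|$ to exploit — which is why the smallness assumptions \eqref{cond1} are imposed directly on the $X(I)$ norms of the linear evolution and of $f$ rather than on a simpler energy quantity.
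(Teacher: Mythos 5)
Your proposal is correct and follows essentially the same route as the paper: a contraction mapping argument on the ball $\{v \in X(I) : \|v\|_{X(I)} \leq 3\eta\}$, using the Strichartz estimate with the $\dot H^1$-admissible pair $(\frac{d+2}{d-2}, \frac{2(d+2)}{d-2})$ on the left and the dual pair $(1,2)$ on the right, and recovering the $C(I;\dot{\H}^1)$ regularity by one further Strichartz application. The only cosmetic difference is that you carry the $\|f\|_{X(I)}^{4/(d-2)}$ factor explicitly in the difference estimate, which the paper absorbs silently; both close identically for $\eta$ small.
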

\begin{proof}
By writing \eqref{NLWvf} in the Duhamel formulation, we have
\begin{align}
v (t) = \G [v] (t) := V (t - t_0) (u_0, u_1) - \int_{t_0}^t S (t - t') \NN (v + f) (t') dt',
\label{Duhv1}
\end{align}

\noi
where $S(t)$ is as defined in \eqref{defS}. We would like to run the contraction argument on the ball
\begin{align*}
B_{\eta, I} := \{ v \in X (I): \| v \|_{X(I)} \leq 3\eta \},
\end{align*}

\noi
where $\eta > 0$ is to be chosen later.

Suppose that $\| (u_0, u_1) \|_{\dot{\H}} \leq A$ for some $A > 0$. 
Then, by the Strichartz estimate (Lemma \ref{LEM:str}) and \eqref{cond1}, for any $v \in B_{a, I}$, we have
\begin{align*}
\| \G [v] \|_{X(I)} &\leq \| V(t - t_0) (u_0, u_1) \|_{X(I)} + C_1 \| v \|_{X(I)}^{\frac{d + 2}{d - 2}} + C_1 \| f \|_{X(I)}^{\frac{d + 2}{d - 2}} \\
&\leq \eta + C_1 (3 \eta)^{\frac{d + 2}{d - 2}} + C_1 \eta^{\frac{d + 2}{d - 2}} \\
&\leq 3\eta
\end{align*}

\noi
for some constant $C_1 > 0$, where in the last inequality we choose $0 < \eta \ll 1$ in such a way that
\begin{align*}
C_1 (3 \eta)^{\frac{d + 2}{d - 2}} \leq \eta \quad \text{and} \quad C_1 \eta^{\frac{d + 2}{d - 2}} \leq \eta.
\end{align*}

\noi
Also, by the Strichartz estimate (Lemma \ref{LEM:str}) and the fundamental theorem of calculus, for any $v_1, v_2 \in B_{\eta, I}$, we have
\begin{align*}
\| \G [v_1] - \G [v_2] \|_{X (I)} &\leq C_2 \| \NN (v_1) - \NN (v_2) \|_{L_I^1 L_x^2} \\
&= C_2 \bigg\| \int_0^1 \NN' \big( v_2 + \al (v_1 - v_2) \big) (v_1 - v_2) d \al \bigg\|_{L_I^1 L_x^2} \\
&\leq C_2 \Big( \| v_1 \|_{X (I)}^{\frac{4}{d - 2}} + \| v_2 \|_{X (I)}^{\frac{4}{d - 2}} \Big) \| v_1 - v_2 \|_{X (I)} \\
&\leq 2C_2 (3 \eta)^{\frac{4}{d - 2}} \| v_1 - v_2 \|_{X (I)} \\
&\leq \frac 12 \| v_1 - v_2 \|_{X (I)}
\end{align*}

\noi
for some constant $C_2 > 0$, where in the last inequality we further shrink $0 < \eta \ll 1$ if possible so that
\begin{align*}
2C_2 (3 \eta)^{\frac{4}{d - 2}} \leq \frac 12.
\end{align*}

\noi
Thus, $\G$ is a contraction from $B_{\eta, I}$ to itself, so that the desired local well-posedness of the equation \eqref{NLWvf} follows. The fact that $(v, \dt v) \in C(I; \dot{\H}^1 (\R^d))$ then follows easily from the Duhamel formulation \eqref{Duhv1}, the Strichartz estimate (Lemma \ref{LEM:str}), \eqref{cond1}, and the fact that $v \in B_{\eta, I}$.
\end{proof}

As a consequence of local well-posedness in Proposition \ref{PROP:LWP}, we have the following blowup alternative.
\begin{lemma}
\label{LEM:blow}
Let $d \geq 3$ and $(u_0, u_1) \in \dot{\H}^1 (\R^d)$. Let $f$ be a function such that $\| f \|_{X([0, T])} < \infty$ for any $T > 0$. Let $T^* = T^* (u_0, u_1, f) > 0$ be the forward maximal time of existence of the following Cauchy problem for $v$:
\begin{align*}
\begin{cases}
\dt^2 v - \Dl v + \NN (v + f) = 0 \\
(v, \dt v)|_{t = t_0} = (u_0, u_1).
\end{cases}
\end{align*}

\noi
Then, we have either
\begin{align*}
T^* = \infty \quad \text{or} \quad \lim_{T \nearrow T^*} \| v \|_{X([0, T])} = \infty.
\end{align*}
\end{lemma}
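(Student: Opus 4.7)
My plan is to argue by contradiction. Assume $T^* < \infty$ and $\lim_{T \nearrow T^*} \| v \|_{X([0, T])} < \infty$; I will produce a strict extension of $v$ past $T^*$ by restarting Proposition \ref{PROP:LWP} from a limiting state at $T^*$, contradicting the maximality of $T^*$.

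The first step is to construct that limiting state in $\dot{\H}^1$. For $0 \leq t_1 < t_2 < T^*$, writing the Duhamel formula and applying the Strichartz estimate of Lemma \ref{LEM:str} (treating $\NN(v + f)$ as a source, with the $\dot{H}^{0}$-wave admissible pair $(\infty, 2)$ on the dual side), together with the pointwise bound $|\NN(v + f)| \les |v|^{\frac{d + 2}{d - 2}} + |f|^{\frac{d + 2}{d - 2}}$ and the identity $\big\| |w|^{\frac{d + 2}{d - 2}} \big\|_{L^1_t L^2_x} = \| w \|_{X}^{\frac{d + 2}{d - 2}}$, I would obtain
\[
\big\| (v(t_2), \dt v(t_2)) - V(t_2 - t_1) (v(t_1), \dt v(t_1)) \big\|_{\dot{\H}^1} \les \big( \| v \|_{X([t_1, t_2])} + \| f \|_{X([t_1, t_2])} \big)^{\frac{d + 2}{d - 2}}.
\]
Since $v, f \in X([0, T^*))$, the right-hand side vanishes as $t_1, t_2 \nearrow T^*$ by absolute continuity of the $X$-norm. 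Coupled with the fact that $V(\cdot)$ is an isometry on $\dot{\H}^1$, this shows that $(v(t), \dt v(t))$ is Cauchy in $\dot{\H}^1$, hence converges to some $(v_0^*, v_1^*) \in \dot{\H}^1$ as $t \nearrow T^*$.

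Next I would restart at $T^*$. Since $V(\cdot - T^*) (v_0^*, v_1^*)$ belongs to $X(\R)$ by Strichartz and $f \in X([0, T^* + 1])$ by hypothesis, dominated convergence yields
\[
\| V(t - T^*) (v_0^*, v_1^*) \|_{X([T^*, T^* + \tau])} + \| f \|_{X([T^*, T^* + \tau])} \too 0 \quad \text{as } \tau \to 0^+.
\]
Choosing $\tau > 0$ small enough that both quantities are $\leq \eta$, Proposition \ref{PROP:LWP} applied at $t_0 = T^*$ with data $(v_0^*, v_1^*)$ produces a solution $\wt v \in C([T^*, T^* + \tau]; \dot{H}^1)$ of \eqref{NLWvf}. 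Concatenating $v$ on $[0, T^*)$ with $\wt v$ on $[T^*, T^* + \tau]$ (and using the convergence $(v(t), \dt v(t)) \to (v_0^*, v_1^*)$ in $\dot{\H}^1$ to match at the boundary) yields a solution of \eqref{NLWvf} on $[0, T^* + \tau]$ satisfying the Duhamel identity, contradicting the definition of $T^*$.

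The point requiring the most care is the twofold use of dominated convergence / absolute continuity of $L^{q}_t L^{r}_x$-integrals on shrinking intervals near $T^*$: once to establish the Cauchy property of $(v, \dt v)$ in $\dot{\H}^1$, and once to secure the smallness hypothesis for the restart. Both reduce to standard measure-theoretic facts once the global $X$-integrability of $v$, $f$, and $V(\cdot - T^*)(v_0^*, v_1^*)$ has been verified, so I do not expect a genuine obstacle; the argument is essentially the standard blowup alternative for the energy-critical NLW, transplanted to the perturbed equation \eqref{NLWvf}.
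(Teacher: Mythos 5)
Your strategy is the standard blowup-alternative argument that the paper implicitly invokes when it calls the lemma "a consequence of Proposition \ref{PROP:LWP}": argue by contradiction, extract a limiting state at $T^*$, and restart using the small-data local theory. The restart, the smallness of $\|V(t-T^*)(v_0^*,v_1^*)\|_{X}$ and $\|f\|_{X}$ on a short interval, and the gluing of the two pieces are all fine.

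The one imprecise step is the deduction of the Cauchy property of $\vec v(t) := (v(t),\dt v(t))$. You have shown
\begin{align*}
\big\| \vec v(t_2) - V(t_2-t_1)\vec v(t_1) \big\|_{\dot{\H}^1} \too 0 \qquad\text{as } t_1, t_2 \nearrow T^*,
\end{align*}
and then claim that, because $V$ is an isometry, this makes $\vec v$ Cauchy. Isometry alone does not suffice: in the triangle inequality
\begin{align*}
\big\|\vec v(t_2) - \vec v(t_1)\big\|_{\dot{\H}^1} \leq \big\|\vec v(t_2) - V(t_2-t_1)\vec v(t_1)\big\|_{\dot{\H}^1} + \big\|V(t_2-t_1)\vec v(t_1) - \vec v(t_1)\big\|_{\dot{\H}^1},
\end{align*}
the second term involves $V(\tau)-\Id$ applied to the varying family $\{\vec v(t_1)\}$, which is not a priori precompact, so strong continuity of $V$ at a fixed vector does not control it. The clean fix is to anchor the Duhamel expansion at a fixed $t_0 < T^*$ and factor out the group: writing $\vec v(t) = V(t-t_0)\vec v(t_0) - V(t-t_0)\int_{t_0}^t V(t_0-t')\,\big(0, \NN(v+f)(t')\big)\,dt'$, the inner integral converges in $\dot{\H}^1$ as $t\nearrow T^*$ (the integrand is in $L^1([t_0,T^*];\dot{\H}^1)$ thanks to the $X$-norm bound on $v$, $f$ and the isometry), and one then passes the limit through $V(t-t_0)$ using strong continuity on the fixed limit vector together with isometry on the difference. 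This yields the limit $(v_0^*,v_1^*)$ directly, after which the rest of your argument goes through unchanged.
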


We now go back to the defocusing energy-critical SNLW \eqref{SNLW} on $\R^d$. By writing $u = v + \Psi$ with $\Psi$ being the stochastic convolution as defined in \eqref{Psi2}, we see that $v$ satisfies
\begin{align}
\begin{cases}
\dt^2 v - \Dl v + \NN (v + \Psi) = 0 \\
(v, \dt v)|_{t = t_0} = (u_0, u_1).
\end{cases}
\label{NLWvPsi}
\end{align}

\noi
We also note that by Lemma \ref{LEM:Psi}, given $\phi \in \HS (L^2 (\R^d), L^2 (\R^d))$, we have
\begin{align}
\| \Psi \|_{X([t_0, t_0 + \tau])} \leq C(\o) \tau^\ta \| \phi \|_{\HS (L^2, L^2)}
\label{Psi_bdd} 
\end{align}

\noi
almost surely for any $t_0 \geq 0$, $\tau > 0$, and some $\ta > 0$. This norm can be made arbitrarily small if we make $\tau > 0$ to be small. Thus, the local well-posedness result (Proposition \ref{PROP:LWP}) and the blowup alternative (Lemma \ref{LEM:blow}) apply to the equation \eqref{NLWvPsi} by letting $f = \Psi$.

\section{Proof of global well-posedness}
In this section, we show the proofs of the two global well-posedness results: Theorem \ref{THM:R} and Theorem \ref{THM:T}.

\subsection{Global well-posedness on $\R^{d}$}
\label{SUBSEC:GWP_R}
In this subsection, we prove Theorem \ref{THM:R}, global well-posedness of the defocusing energy-critical SNLW \eqref{SNLW} on $\R^d$. As mentioned in Section \ref{SEC:intro}, the proof is based on the perturbation lemma (Lemma \ref{LEM:pert}). However, to carry out the perturbation argument, we first need to show an a priori energy bound using Ito's lemma. In order to justify the use of Ito's lemma, we need to perform an approximation procedure for the defocusing energy-critical SNLW \eqref{SNLW} as below.

Given $N \in \N$, we denote $P_{\les N}$ to be a smooth frequency projection onto $\{|\xi| \leq N \}$. We consider the following truncated defocusing energy-critical SNLW:
\begin{align}
\begin{cases}
\dt^2 u_{\les N} - \Dl u_{\les N} + P_{\les N} \mathcal{N} (u_{\les N}) = P_{\les N} \phi \xi \\
(u_{\les N}, \dt u_{\les N})|_{t = 0} = (P_{\les N} u_0, P_{\les N} u_1),
\end{cases}
\label{SNLWt}
\end{align}

\noi
where $\NN (u) = |u|^{\frac{4}{d - 2}} u$. 
We define the truncated stochastic convolution $\Psi_{\les N}$ by \eqref{Psi2} with $\phi$ replaced by $P_{\les N} \phi$. Due to the boundedness of $P_{\les N}$, we can easily see from Lemma \ref{LEM:Psi} that
\begin{align*}
\| \Psi_{\les N} \|_{X([t_0, t_0 + T])} \leq C(\o) T^\ta \| \phi \|_{\HS (L^2, L^2)} 
\end{align*}

\noi
almost surely for any $t_0 \geq 0$, $T > 0$, and some $\ta > 0$. Thus, again due to the boundedness of $P_{\les N}$, we see that the local well-posedness result (Proposition \ref{PROP:LWP}) and the blowup alternative (Lemma \ref{LEM:blow}) apply to the following equation for $v_{\les N} := u_{\les N} - \Psi_{\les N}$:
\begin{align*}
\begin{cases}
\dt^2 v_{\les N} - \Dl v_{\les N} + P_{\les N} \mathcal{N} (v_{\les N} + \Psi_{\les N}) = 0 \\
(v_{\les N}, \dt v_{\les N})|_{t = 0} = (P_{\les N} u_0, P_{\les N} u_1).
\end{cases}
\end{align*}

\noi
This shows that the truncated defocusing energy-critical SNLW \eqref{SNLWt} is locally well-posed.

Let us now show the following lemma regarding the convergence of $u_{\les N}$ to the solution $u$ to SNLW \eqref{SNLW}. 
\begin{lemma}
\label{LEM:uN}
Let $d \geq 3$, $\phi \in \HS (L^2, L^2)$, and $(u_0, u_1) \in \H^1$. Then, the following holds true almost surely. Assume that $u$ is a solution to the defocusing energy-critical SNLW \eqref{SNLW} on $[0, T]$ for some $T > 0$, and assume that $u_{\les N}$ is a solution to the truncated defocusing energy-critical SNLW \eqref{SNLWt} on $[0, T_N]$ for some $T_N > 0$. Also, let $R > 0$ be such that $\| u \|_{X ([0, T])} \leq R$, where the $X([0, T])$-norm is as defined in \eqref{defX}. Then, by letting $S_N = \min (T, T_N)$, we have
\begin{align*}
\| \vec{u} - \vec{u}_{\les N} \|_{C([0, S_N]; \dot{\H}_x^1)} \too 0
\end{align*}
and
\begin{align*}
\| u - u_{\les N} \|_{X([0, S_N] \times \R^d)} \too 0
\end{align*}

\noi
as $N \to \infty$.
\end{lemma}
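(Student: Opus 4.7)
\smallskip
The strategy is a long-time stability analysis in the spirit of the perturbation argument of Lemma \ref{LEM:pert}, comparing $u$ and $u_{\les N}$ on subintervals where the relevant Strichartz norm of the reference solution is small. Writing $v = u - \Psi$ and $v_{\les N} = u_{\les N} - \Psi_{\les N}$, and setting $y_N := v - v_{\les N}$ and $z_N := \Psi - \Psi_{\les N}$, one has $u - u_{\les N} = y_N + z_N$ with
\begin{align*}
\dt^2 y_N - \Dl y_N = P_{\les N} \NN(v_{\les N} + \Psi_{\les N}) - \NN(v + \Psi), \qquad \vec{y}_N|_{t=0} = (I - P_{\les N})(u_0, u_1).
\end{align*}
Using Lemma \ref{LEM:Psi} together with the convergence $P_{\les N} \phi \to \phi$ in $\HS(L^2, L^2)$, one obtains $\eps_N := \|z_N\|_{L^\infty_T \dot{\H}^1_x} + \|z_N\|_{X([0,T])} \too 0$ almost surely, while $\|(I - P_{\les N})(u_0, u_1)\|_{\dot{\H}^1} \to 0$ deterministically.

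The hypothesis $\|u\|_{X([0,T])} \leq R$ and the finiteness of $\|\Psi\|_{X([0,T])}$ almost surely allow one to partition $[0, T]$ into $J = J(\o, R, \eta_0)$ consecutive subintervals $I_j = [t_j, t_{j+1}]$ such that $\|v + \Psi\|_{X(I_j)} \leq \eta_0$ for every $j$, where $\eta_0 = \eta_0(d) > 0$ is a small constant to be chosen. I then prove by induction on $j = 0, \ldots, J - 1$ that
\begin{align*}
A_j^N := \|y_N\|_{X(I_j \cap [0, S_N])} + \|\vec{y}_N\|_{L^\infty_{I_j \cap [0, S_N]} \dot{\H}^1_x} \too 0 \quad \text{as } N \to \infty.
\end{align*}

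On each subinterval $I_j \cap [0, S_N]$, the Strichartz estimate of Lemma \ref{LEM:str}, the $L^2$-boundedness of $P_{\les N}$, the decomposition
$P_{\les N}\NN(v_{\les N}+\Psi_{\les N})-\NN(v+\Psi) = P_{\les N}[\NN(v_{\les N}+\Psi_{\les N})-\NN(v+\Psi)] + (P_{\les N}-I)\NN(v+\Psi)$, and the pointwise bound $|\NN(a) - \NN(b)| \les (|a|^{\frac{4}{d-2}} + |b|^{\frac{4}{d-2}}) |a - b|$ yield
\begin{align*}
A_j^N &\les \|\vec{y}_N(t_j)\|_{\dot{\H}^1} + \|(I - P_{\les N}) \NN(v + \Psi)\|_{L^1_{I_j} L^2_x} \\
&\qquad + C \big( \eta_0^{\frac{4}{d-2}} + (A_j^N + \eps_N)^{\frac{4}{d-2}} \big) (A_j^N + \eps_N).
\end{align*}
Provided $\eta_0$ is chosen small enough (depending only on $d$) that $2 C \eta_0^{\frac{4}{d-2}} < \frac 12$, a continuity/bootstrap argument in $t \in I_j$ starting from $A^N_j \leq 2\|\vec{y}_N(t_j)\|_{\dot{\H}^1}$ near $t_j$ absorbs the $A_j^N$ factor on the right and gives
\begin{align*}
A_j^N \les \|\vec{y}_N(t_j)\|_{\dot{\H}^1} + \eps_N + \|(I - P_{\les N}) \NN(v + \Psi)\|_{L^1_{I_j} L^2_x}.
\end{align*}
The last term tends to zero by the dominated convergence theorem, since $\NN(v + \Psi) \in L^1_t L^2_x$ (because $u \in X$) and $(I - P_{\les N}) g \to 0$ in $L^2_x$ for each fixed $g \in L^2$. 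The inductive hypothesis then propagates the control from $t_j$ to $t_{j+1}$ through the endpoint energy bound, and after $J$ iterations one concludes $\|y_N\|_{X([0, S_N])} + \|\vec{y}_N\|_{L^\infty_{[0, S_N]} \dot{\H}^1_x} \to 0$; combined with $\eps_N \to 0$, this yields the two stated convergences.

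The main obstacle is the standard one for any energy-critical stability statement: the difference-of-nonlinearities estimate is only useful once a Strichartz norm of the reference solution is small, which forces the finite subdivision and makes the implicit constants (and the step count $J$) depend on both $R$ and $\o$. Since $J$ is almost surely finite, the iterated constants remain finite and the $N \to \infty$ convergence at each step is preserved through the induction.
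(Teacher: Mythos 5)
Your proposal is correct and follows essentially the same route as the paper's proof: both decompose the difference $u - u_{\les N}$ (you peel off the stochastic convolution difference $z_N = \Psi - \Psi_{\les N}$ explicitly, which is a cosmetic repackaging), subdivide $[0,T]$ into finitely many intervals on which $\|u\|_{X(I_j)}$ is small, apply the Strichartz estimate and the difference-of-nonlinearities bound, invoke dominated convergence for the $(\Id - P_{\les N})\NN(u)$ term, absorb the quadratic-in-$A_j^N$ term by a continuity argument, and propagate the smallness through the finitely many subintervals. The only minor omission is that in the nonlinearity difference you should have $(\eta_0 + A_j^N + \eps_N)^{4/(d-2)}$ rather than $(A_j^N + \eps_N)^{4/(d-2)}$ in the second factor, since $\|u_{\les N}\|_{X(I_j)}$ is only bounded by $\eta_0 + A_j^N + \eps_N$, but this does not affect the argument.
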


\begin{remark} \rm
In Lemma \ref{LEM:uN}, due to the lack of global well-posedness of the truncated equation \eqref{SNLWt} at this point, we need to assume that the existence time $T_N$ of \eqref{SNLWt} depends on $N$. One can avoid this issue by inserting a $H^1$-norm truncation as in \cite{dBD}, so that the nonlinearity becomes Lipschitz and hence one has global well-posedness for the truncated equation. In this paper, we choose to proceed with the existence time $T_N$ dependent on $N$, which turns out to be harmless at a later point (see the proof of Proposition \ref{PROP:energy}).
\end{remark}

\begin{proof}
Let $0 < \eta \ll 1$ be chosen later and $0 < \eps \ll \eta$. By slight modifications of Lemma \ref{LEM:Psi}, we have that for almost sure $\o \in \O$,
\begin{align}
\| \Psi - \Psi_{\les N} \|_{X ([0, T])} + \| \Psi - \Psi_{\les N} \|_{L_{T}^\infty H_x^1} < \eps
\label{uN2}
\end{align}

\noi
given $N \geq N_0 (\o, \eps)$. 

We now divide the interval $[0, T]$ into $J = J(R, \eta)$ many subintervals $I_j = I_j (\o) = [t_j, t_{j + 1}]$ with $0 = t_0 < t_1 < \cdots < t_{J - 1} < t_J = T$ such that
\begin{align}
\| u \|_{X(I_j)} \leq \eta
\label{uN1}
\end{align}

\noi
for $j = 0, 1, \dots, J - 1$.

As in the proof of Proposition \ref{PROP:LWP}, by using the Duhamel formulation \eqref{Duh}, the Strichartz estimate (Lemma \ref{LEM:str}), and the fundamental theorem of calculus, we obtain
\begin{align}
\begin{split}
\| u - u_{\les N} \|_{X ([0, \tau])} &\les \| u (0) - u_{\les N} (0) \|_{\dot{\H}^1} \\
&\quad + \big( \| u \|_{X ([0, \tau])} + \| u_{\les N} \|_{X ([0, \tau])} \big)^{\frac{4}{d - 2}} \| u - u_{\les N} \|_{X ([0, \tau])} \\
&\quad + \| (\Id - P_{\les N}) \NN (u) \|_{L_t^1([0, \tau]; L_x^2)} + \| \Psi - \Psi_{\les N} \|_{X ([0, \tau])}
\end{split}
\label{uN3}
\end{align}

\noi
for any $0 \leq \tau \leq \min (t_1, T_N)$. Then, from \eqref{uN3}, by the Lebesgue dominated convergence theorem applied to $(\Id - P_{\les N}) u_0$, $(\Id - P_{\les N}) u_1$, and $(\Id - P_{\les N}) \NN (u)$ along with \eqref{uN2} and \eqref{uN1}, we have
\begin{align*}
\| u - u_{\les N} \|_{X ([0, \tau])} < C\eps + C \big( \eta + \| u_{\les N} \|_{X ([0, \tau])} \big)^{\frac{4}{d - 2}} \| u - u_{\les N} \|_{X ([0, \tau])}
\end{align*}

\noi
for some absolute constant $C > 0$, given $N \geq N_0 (\o, \eps, u)$ sufficiently large. By taking $\eta > 0$ to be sufficiently small (independent of $\eps$), we can then use a standard continuity argument to get
\begin{align}
\| u - u_{\les N} \|_{X (I_0 \cap [0, T_N])} < 2C \eps
\label{uN4}
\end{align}
and
\begin{align}
\| u_{\les N} \|_{X (I_0 \cap [0, T_N])} \leq 2 \eta.
\label{uN5}
\end{align}

\noi
Similar as above, we use the Duhamel formulation \eqref{Duh}, the Strichartz estimate (Lemma \ref{LEM:str}), the Lebesgue dominated convergence theorem, \eqref{uN2}, \eqref{uN1}, and \eqref{uN5}, we obtain
\begin{align*}
\| \vec u - \vec u_{\les N} \|_{L^\infty(I_0 \cap [0, T_N]; \dot{\H}_x^1)} < C \eps + C \eta^{\frac{4}{d - 2}} \| u - u_{\les N} \|_{X (I_0 \cap [0, T_N])}
\end{align*}

\noi
given $N \geq N_0 (\o, \eps, u)$. By \eqref{uN4} and taking $\eta > 0$ to be sufficiently small (independent of $\eps$), we then obtain
\begin{align*}
\| \vec u - \vec u_{\les N} \|_{L^\infty(I_0 \cap [0, T_N]; \dot{\H}_x^1)} < 2 C \eps.
\end{align*}

\noi
In particular, if $t_1 \leq T_N$, we have
\begin{align*}
\| \vec u (t_1) - \vec u_{\les N} (t_1) \|_{\dot{\H}^1} < 2 C \eps.
\end{align*}

We now repeat the above arguments on $I_1$ to obtain
\begin{align*}
\| u - u_{\les N} \|_{X (I_1 \cap [0, T_N])} < 4 C^2 \eps 
\qquad \text{and} \qquad
\| \vec u - \vec u_{\les N} \|_{L^\infty(I_1 \cap [0, T_N]; \dot{\H}_x^1)} < 4 C^2 \eps.
\end{align*}

\noi
By applying the above arguments repetitively, we obtain that for $j = 0, 1, \dots, J - 1$,
\begin{align*}
\| u - u_{\les N} \|_{X (I_j \cap [0, T_N])} < (2 C)^{j + 1} \eps 
\qquad \text{and} \qquad
\| \vec u - \vec u_{\les N} \|_{L^\infty(I_j \cap [0, T_N]; \dot{\H}_x^1)} < (2 C)^{j + 1} \eps.
\end{align*} 

\noi
Thus, we have
\begin{align*}
&\| \vec u - \vec u_{\les N} \|_{L^\infty([0, S_N]; \dot{\H}_x^1)} \leq \sum_{j = 0}^{J - 1} \| \vec u - \vec u_{\les N} \|_{L^\infty(I_j \cap [0, T_N]; \dot{\H}_x^1)} \leq (2 C)^J \eps, \\
&\| u - u_{\les N} \|_{X ([0, S_N])} \leq \sum_{j = 0}^{J - 1} \| u - u_{\les N} \|_{X (I_j \cap [0, T_N])} \leq (2 C)^J \eps.
\end{align*}

\noi
Since $\eps > 0$ can be arbitrarily small and $J$ depends only on $R > 0$ and an absolute constant $\eta > 0$, we can conclude the desired convergence results.

\end{proof}

We now show the following a priori bound on the energy $E$ as defined in \eqref{energy}.
\begin{proposition}
\label{PROP:energy}
Let $d \geq 3$, $\phi \in \HS (L^2, L^2)$, and $(u_0, u_1) \in \dot{\H}^1 (\R^d)$. Let $u$ be the solution to the defocusing energy-critical SNLW \eqref{SNLW} with $(u, \dt u)|_{t = 0} = (u_0, u_1)$ and let $T^* = T^* (\o, u_0, u_1)$ be the forward maximal time of existence. Then, given any $T_0 > 0$, there exists $C = C( \| (u_0, u_1) \|_{\dot{\H}^1}, \| \phi \|_{\HS (L^2, L^2)}, T_0 ) > 0$ such that for any stopping time $T$ with $0 < T < \min (T^* , T_0)$ almost surely, we have
\begin{align*}
\E \Big[ \sup_{0 \leq t \leq T} E (\vec u (t)) \Big] \leq C.
\end{align*}
\end{proposition}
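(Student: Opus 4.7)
The plan is to apply Ito's formula to the energy $E(\vec u_{\les N})$ along the flow of the frequency-truncated equation \eqref{SNLWt}, derive an a priori bound uniform in $N$, and then pass to the limit using the convergence result of Lemma \ref{LEM:uN}.

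For the truncated solution $u_N := u_{\les N}$ with maximal existence time $T_N = T_N(\omega)$, the velocity $\dt u_N$ satisfies the $L^2$-valued Ito equation
\begin{align*}
d(\dt u_N) = \bigl( \Dl u_N - P_{\les N}\NN(u_N)\bigr)\, dt + P_{\les N}\phi\, dW,
\end{align*}
while $u_N$ itself is pathwise $C^1$ in time. Applying Ito's formula to $\tfrac12\|\dt u_N\|_{L^2}^2$ (which produces the correction $\tfrac12\|P_{\les N}\phi\|_{\HS(L^2, L^2)}^2\, dt$) together with the ordinary chain rule for the remaining two terms $\tfrac12\|\nb u_N\|_{L^2}^2$ and $\tfrac{d-2}{2d}\int|u_N|^{\frac{2d}{d-2}}\, dx$, all drift contributions cancel exactly as in the deterministic energy identity, thanks to $u_N = P_{\les N} u_N$ and the self-adjointness of $P_{\les N}$. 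This yields the key identity
\begin{align*}
E(\vec u_N(t)) = E(\vec u_N(0)) + M_N(t) + \tfrac{t}{2}\|P_{\les N}\phi\|_{\HS(L^2, L^2)}^2,
\end{align*}
where $M_N(t) := \int_0^t \langle \dt u_N(s), P_{\les N}\phi\, dW(s)\rangle$ is a continuous martingale with quadratic variation $[M_N]_t \leq \|\phi\|_{\HS}^2 \int_0^t \|\dt u_N(s)\|_{L^2}^2\, ds \leq 2\|\phi\|_{\HS}^2 \int_0^t E(\vec u_N(s))\, ds$.

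Next, I would introduce the stopping time $\tau_N := T \wedge T_N \leq T_0$, take the supremum over $t \in [0, \tau_N]$, and apply Burkholder--Davis--Gundy followed by Young's inequality:
\begin{align*}
\E\Big[\sup_{t \leq \tau_N}|M_N(t)|\Big] \les \sqrt{T_0}\,\|\phi\|_{\HS}\, \E\Big[\sup_{t \leq \tau_N} E(\vec u_N(t))^{1/2}\Big] \leq \tfrac{1}{2}\E\Big[\sup_{t \leq \tau_N} E(\vec u_N(t))\Big] + C T_0 \|\phi\|_{\HS}^2.
\end{align*}
The first term absorbs into the left-hand side, and combining with $E(\vec u_N(0)) \leq C(\|(u_0,u_1)\|_{\dot{\H}^1})$ (uniform in $N$, via Sobolev) gives the $N$-uniform bound $\E[\sup_{t \leq \tau_N} E(\vec u_N(t))] \leq K$ with $K = K(\|(u_0, u_1)\|_{\dot{\H}^1}, \|\phi\|_{\HS(L^2, L^2)}, T_0)$.

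To conclude, I would argue that $\tau_N = T$ for $N$ large almost surely: by Lemma \ref{LEM:uN}, $\|u_N\|_{X([0, T \wedge T_N])}$ stays bounded in $N$, hence so does $\|v_N\|_{X([0, T \wedge T_N])}$ for $v_N := u_N - \Psi_{\les N}$ (using \eqref{Psi_bdd}), and then the blow-up alternative (Lemma \ref{LEM:blow}) rules out $T_N < T$. Combined with the convergence $\vec u_N \to \vec u$ in $C([0, T]; \dot{\H}^1)$ from Lemma \ref{LEM:uN} and continuity of $E$ on $\dot{\H}^1$ (via the Sobolev embedding $\dot H^1 \embeds L^{\frac{2d}{d-2}}$), we have $\sup_{t \leq T} E(\vec u_N(t)) \to \sup_{t \leq T} E(\vec u(t))$ almost surely, so Fatou's lemma delivers the desired bound. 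The main obstacle is the rigorous justification of Ito's formula for the nonlinear functional $E$ in this infinite-dimensional setting: although $P_{\les N}$ provides spatial smoothness of $u_N$, the power-type term $\int|u_N|^{\frac{2d}{d-2}}\, dx$ is not polynomial and its higher Fr\'echet derivatives on $\dot H^1$ are not globally bounded, so I would follow the approximation/regularization strategy of \cite{dBD, OO, CL, CLO} to make the Ito computation precise.
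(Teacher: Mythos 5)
Your proposal follows essentially the same strategy as the paper: apply It\^o's lemma to the frequency-truncated equation, bound the martingale via Burkholder--Davis--Gundy and Young's inequality, absorb, and pass to the limit using Lemma \ref{LEM:uN} and Fatou. The overall architecture is correct.

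However, there is one concrete discrepancy worth flagging. You assert that \emph{all drift terms cancel exactly}, ``thanks to $u_N = P_{\les N} u_N$ and the self-adjointness of $P_{\les N}$.'' In the paper $P_{\les N}$ is a \emph{smooth} frequency projection, which is not idempotent, so $u_{\les N} \neq P_{\les N}u_{\les N}$ and the drift does not vanish; instead it leaves the residual
\begin{align*}
\int_0^t \int_{\R^d}\dt u_{\les N}(t')\,(\Id - P_{\les N})\NN(u_{\les N})\,dx\,dt',
\end{align*}
which appears explicitly in the paper's identity \eqref{EN} and is then shown to tend to $0$ as $N\to\infty$ in \eqref{EN1} via Lebesgue dominated convergence and Lemma \ref{LEM:uN}. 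Your exact-cancellation argument would be valid if $P_{\les N}$ were the sharp Fourier projection (idempotent, self-adjoint, so that $\dt u_{\les N}$ lies in the range of $P_{\les N}$ and the residual pairing vanishes by orthogonality), and indeed nothing in the rest of the argument seems to obstruct using the sharp cutoff since only $L^2$-boundedness of $P_{\les N}$ is ever used. So this is not a fatal gap, but you should either state that you are using the sharp projector, or keep the smooth one and add the residual-term estimate as the paper does. A smaller stylistic difference: the paper also localizes via the stopping time $T_1(R)=\inf\{\tau:\|u\|_{X([0,\tau])}\geq R\}$ with deterministic $R$, sending $R\to\infty$ at the end, which gives cleaner control for justifying both the truncated solution's existence up to time $T_2$ and the application of It\^o's lemma; your version works with an implicitly $\omega$-dependent bound $R(\omega)=\|u\|_{X([0,T])}$, which is fine for the a.s.\ convergence and Fatou step but is worth making explicit when rigorizing the It\^o computation you defer to at the end.
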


\begin{proof}
We write the energy $E$ in \eqref{energy} as
\begin{align*}
E \big( u_1 (t), u_2 (t) \big) = \frac 12 \int_{\R^d} |u_2 (t)|^2 dx + \frac 12 \int_{\R^d} |\nabla u_1 (t)|^2 dx + \frac{d - 2}{2d} \int_{\R^d} |u_1 (t)|^{\frac{2d}{d - 2}} dx.
\end{align*}

\noi
A direct computation yields
\begin{align}
E' \big( u_1 (t), u_2 (t) \big) (v_1, v_2) =  \int_{\R^d} \Big( u_2 (t) v_2 + \nb u_1 (t) \cdot \nb v_1 + |u_1 (t)|^{\frac{4}{d - 2}} u_1 (t) v_1 \Big) dx
\label{E1}
\end{align}

\noi
and
\begin{align}
\begin{split}
E'' \big( u_1 (t), &u_2 (t) \big) \big( (v_1, v_2), (w_1, w_2) \big) \\
&= \int_{\R^d} v_2 w_2 dx + \int_{\R^d} \nb v_1 \cdot \nb w_1 dx + \frac{d + 2}{d - 2} \int_{\R^d} |u_1 (t)|^{\frac{4}{d - 2}} v_1 w_1 dx
\end{split}
\label{E2}
\end{align}

Given $R > 0$, we define the stopping time
\begin{align*}
T_1 = T_1 (R) := \inf \big\{ \tau \geq 0: \| u \|_{X ([0, \tau])} \geq R \big\},
\end{align*}

\noi
where the $X ([0, \tau])$-norm is as defined in \eqref{defX}. We also set $T_2 := \min (T, T_1)$. Note that we have
\begin{align*}
\| u \|_{X ([0, T])} < \infty
\end{align*}

\noi
almost surely in view of the blowup alternative in Lemma \ref{LEM:blow}, so that we have $T_2 \nearrow T$ almost surely as $R \to \infty$.

Let $u_{\les N}$ be the solution to the truncated defocusing energy-critical SNLW \eqref{SNLWt} with maximal time of existence $T_N^* = T_N^* (\o)$. By Lemma \ref{LEM:uN}, we can deduce that there exists an almost surely finite number $N_0 (\o) \in \N$ such that $T_N^* \geq T_2$ for any $N \geq N_0 (\o)$. Indeed, suppose not, then there exists $\O_0 \subset \O$ with positive measure such that for all $\o \in \O_0$, there exists a sequence of increasing numbers $\{ N_j (\o) \}_{j \in \N} \subset \N$ such that $T_{N_j (\o)}^* < T_2$. By the blowup alternative (Lemma \ref{LEM:blow}), we know that for all $\o \in \O_0$ and $j \in \N$, there exists $T_{N_j (\o)} < T_2$ such that
\begin{align*}
\| u_{\les N_j (\o)} \|_{X([0, T_{N_j (\o)}])} > 2R.
\end{align*}

\noi
This contradicts with the convergence of $u_{\les N_j (\o)}$ to $u$ in $X ([0, T_{N_j (\o)}])$ as $N_j (\o) \to \infty$ in Lemma \ref{LEM:uN}, given that we have
\begin{align*}
\| u \|_{X ([0, T_2])} \leq R.
\end{align*}

We can now work with \eqref{SNLWt} on $[0, T_2]$. Note that we can write \eqref{SNLWt} in the following Ito formulation:
\begin{align*}
d \begin{pmatrix}
u_{\les N} \\ \dt u_{\les N}
\end{pmatrix}
=
\begin{pmatrix}
0 & 1 \\ \Dl & 0
\end{pmatrix}
\begin{pmatrix}
u_{\les N} \\ \dt u_{\les N}
\end{pmatrix} dt 
+
\begin{pmatrix}
0 \\ - P_{\les N} \NN (u_{\les N})
\end{pmatrix} dt
+
\begin{pmatrix}
0 \\ P_{\les N} \phi dW
\end{pmatrix}.
\end{align*}

\noi
By Ito's lemma (see \cite[Theorem 4.32]{DPZ}) along with \eqref{E1} and \eqref{E2}, we obtain
\begin{align}
\begin{split}
E \big( u_{\les N} (t), &\dt u_{\les N} (t) \big) = E (P_{\les N} u_0, P_{\les N} u_1) + \sum_{k \in \N} \int_0^t \int_{\R^d} \dt u_{\les N} (t') P_{\les N} \phi e_k dx d \be_k (t') \\
& + \int_0^t \int_{\R^d} \dt u_{\les N} (t') (\Id - P_{\les N}) \NN (u_{\les N}) dx dt' + t \| P_{\les N} \phi \|_{\HS (L^2, L^2)}^2
\end{split}
\label{EN}
\end{align} 

\noi
for $0 < t < T_2$.

For the third term on the right-hand-side of \eqref{EN}, by H\"older's inequalities, the Lebesgue dominated convergence theorem applied to $(\Id - P_{\les N}) \NN (u)$, and Lemma \ref{LEM:uN}, we have 
\begin{align}
\begin{split}
\bigg| \int_0^t &\int_{\R^d} \dt u_{\les N} (t') (\Id - P_{\les N}) \NN (u_{\les N}) dx dt' \bigg| \\
&\les \| \dt u_{\les N} \|_{L_{T_2}^\infty L_x^2} \| (\Id - P_{\les N}) \NN (u_{\les N}) \|_{L_{T_2}^1 L_x^2} \\
&\les \| \dt u_{\les N} \|_{L_{T_2}^\infty L_x^2} \Big( \| (\Id - P_{\les N}) \NN (u) \|_{L_{T_2}^1 L_x^2} + \| \NN (u) - \NN (u_{\les N}) \|_{L_{T_2}^1 L_x^2} \Big) \\
&\les \| \dt u_{\les N} \|_{L_{T_2}^\infty L_x^2} \Big( \| (\Id - P_{\les N}) \NN (u) \|_{L_{T_2}^1 L_x^2} \\
&\quad + \big( \| u \|_{X ([0, T_2])} + \| u_{\les N} \|_{X ([0, T_2])} \big)^{\frac{4}{d - 2}} \| u - u_{\les N} \|_{X ([0, T_2])} \Big) \\
&\too 0
\end{split}
\label{EN1}
\end{align}

\noi
almost surely, as $N \to \infty$. Thus, by applying the Burkholder-Davis-Gundy inequality to \eqref{EN} along with \eqref{EN1}, H\"older's inequality, and Cauchy's inequality, we obtain
\begin{align*}
\begin{split}
\E \Big[ &\sup_{0 \leq t \leq T_2} E \big( u_{\les N} (t), \dt u_{\les N} (t) \big) \Big] \\
&\leq E (P_{\les N} u_0, P_{\les N} u_1) + C \E \bigg[ \bigg( \sum_{k \in \N} \int_0^{T_2} \bigg| \int_{\R^d} \dt u_{\les N} (t') P_{\les N} \phi e_k dx \bigg|^2 dt' \bigg)^{1/2} \bigg] \\
&\quad + \eps + T_2 \| \phi \|_{\HS (L^2, L^2)}^2 \\
&\leq C E (u_0, u_1) + C T_2^{\frac 12} \E \big[ \| \dt u_{\les N} \|_{L_{T_2}^\infty L_x^2}  \| \phi \|_{\HS (L^2, L^2)} \big] + \eps + T_2 \| \phi \|_{\HS (L^2, L^2)}^2 \\
&\leq \frac 12 \E \Big[ \sup_{0 \leq t \leq T_2} E \big( u_{\les N} (t), \dt u_{\les N} (t) \big) \Big] + C E (u_0, u_1) + C T_2 \| \phi \|_{\HS (L^2, L^2)}^2 + \eps
\end{split}
\end{align*}

\noi
for some absolute constant $C > 0$ and $\eps > 0$ arbitrarily small given $N \geq N_0 (\o, \eps, u)$ sufficiently large. This shows that
\begin{align}
\E \Big[ \sup_{0 \leq t \leq T_2} E \big( \vec u_{\les N} (t) \big) \Big] \leq C E (u_0, u_1) + C T_2 \| \phi \|_{\HS (L^2, L^2)}^2.
\label{EN2}
\end{align}
\noi
By Fatou's lemma and \eqref{EN2}, we have
\begin{align*}
\E \Big[ \sup_{0 \leq t \leq T_2} E \big( \vec u (t) \big) \Big] \leq \liminf_{N \to \infty} \E \Big[ \sup_{0 \leq t \leq T_2} E \big( \vec u_{\les N} (t) \big) \Big] \leq C
\end{align*}

\noi
for some constant $C = C( \| (u_0, u_1) \|_{\dot{\H}^1}, \| \phi \|_{\HS (L^2, L^2)}, T_0 )$. In view of the almost sure convergence of $T_2$ to $T$, we then obtain the desired energy bound using Fatou's lemma.
\end{proof}

We are now ready to prove Theorem \ref{THM:R}.
\begin{proof}[Proof of Theorem \ref{THM:R}]
Let $u$ be a local-in-time solution to the defocusing energy-critical SNLW \eqref{SNLW} given by Proposition \ref{PROP:LWP}. We let $v = u - \Psi$, where $v$ satisfies
\begin{align}
\begin{cases}
\dt^2 v - \Dl v + \NN (v + \Psi) = 0 \\
(v, \dt v)|_{t = 0} = (u_0, u_1),
\end{cases}
\label{SNLMv}
\end{align}

\noi
where $\NN (u) = |u|^{\frac{4}{d - 2}} u$. 
Given $T > 0$, if we suppose that the solution $u$ exists on $[0, T]$, then by the energy bound in Proposition \ref{PROP:energy}, we have
\begin{align*}
\sup_{0 \leq t \leq T} E (\vec u (t)) \leq C (\o, \| (u_0, u_1) \|_{\dot{\H}^1}, \| \phi \|_{\HS (L^2, L^2)}, T) < \infty.
\end{align*}

\noi
Then, by Lemma \ref{LEM:Psi} and Remark \ref{RMK:dtPsi}, we obtain
\begin{align}
\begin{split}
\sup_{0 \leq t \leq T} E (\vec v (t)) &\leq C \sup_{0 \leq t \leq T} E (\vec u (t)) + C \sup_{0 \leq t \leq T} E (\vec \Psi (t)) \\
&\leq C (\o, \| (u_0, u_1) \|_{\dot{\H}^1}, \| \phi \|_{\HS (L^2, L^2)}, T) \\
&= : R
\end{split}
\label{Ev}
\end{align}

Given a target time $T > 0$, we pick any $t_0 \in [0, T)$ and suppose that the solution $v$ to \eqref{SNLMv} has already been constructed on $[0, t_0]$. Our goal is to show the existence of a unique solution $v$ to \eqref{SNLMv} on $[t_0, t_0 + \tau] \cap [0, T]$ with $\tau > 0$ independent of $t_0$. In this way, we can iterate the argument so that a global solution $v$ to \eqref{SNLMv} can be constructed on $[0, T]$, which then concludes the proof of Theorem \ref{THM:R}. 

Let $w$ be the global solution to the deterministic defocusing energy-critical NLW:
\begin{align}
\dt^2 w - \Dl w + |w|^{\frac{4}{d - 2}} w = 0
\label{NLWw}
\end{align}

\noi
with $(w, \dt w)|_{t = t_0} = \vec v (t_0)$. By \eqref{Ev}, we have $\| \vec w (t_0) \|_{\dot{\H}^1} \leq R$. By Lemma \ref{LEM:bdd}, we have
\begin{align}
\| w \|_{X ([t_0, T])} \leq C (R) < \infty.
\label{bdd_w}
\end{align}

\noi
Let $0 < \eta \ll 1$ be chosen later. By \eqref{bdd_w}, we can divide the interval $[t_0, T]$ into $J = J(R, \eta)$ many subintervals $I_j = [t_j, t_{j + 1}]$ such that
\begin{align}
\| w \|_{X (I_j)} \leq \eta
\label{bdd_wj}
\end{align} 

\noi
for $j = 0, 1, \dots, J - 1$. Recalling the operators $V(t)$ and $S(t)$ in \eqref{defS}, by using the Duhamel formulation of \eqref{NLWw}, the Strichartz estimate (Lemma \ref{LEM:str}), and \eqref{bdd_wj}, we have
\begin{align}
\begin{split}
\big\| V (t - t_j) \big( w (t_j), \dt w (t_j) \big) \big\|_{X(I_j)} 
&= \bigg\| w(t) + \int_{t_j}^t S(t - t') \NN (w) (t') dt' \bigg\|_{X(I_j)} \\
&\leq \| w \|_{X (I_j)} + C \| w \|_{X (I_j)}^{\frac{d + 2}{d - 2}} \\
&\leq \eta + C \eta^{\frac{d + 2}{d - 2}} \\
&\leq 2 \eta
\end{split}
\label{Sw}
\end{align}

\noi
for $j = 0, 1, \dots, J - 1$, given that $\eta > 0$ is sufficiently small.

We now consider $v$ on the first interval $I_0$, which we can assume that $I_0 \subseteq [t_0, t_0 + \tau]$ for some $\tau > 0$ to be determined later. Since $\vec v (t_0) = \vec w (t_0)$, by \eqref{Sw} we have
\begin{align}
\big\| V (t - t_0) \big( v (t_0), \dt v (t_0) \big) \big\|_{X(I_0)} 
= \big\| V (t - t_0) \big( w (t_0), \dt w (t_0) \big) \big\|_{X(I_0)} \leq 2 \eta.
\label{Sv}
\end{align}

\noi
Using Proposition \ref{PROP:LWP} along with \eqref{Sv} and \eqref{Psi_bdd} (with $\tau > 0$ sufficiently small such that $C(\o) \tau^\ta \| \phi \|_{\HS (L^2, L^2)} \leq 2 \eta$), we obtain the existence of $v$ on $I_0$ and
\begin{align}
\| v \|_{X (I_0)} \leq 6 \eta,
\label{vbdd}
\end{align}

\noi
given that $0 < 2 \eta \leq \eta_0$ for sufficiently small absolute constant $\eta_0 > 0$ and $I_0 \subset [t_0, t_0 + \tau]$. We set $$f = \NN (v + \Psi) - \NN (v).$$ 
By the fundamental theorem of calculus, \eqref{vbdd}, and \eqref{Psi_bdd}, we have
\begin{align*}
\| f \|_{L_{I_0}^1 L_x^2} &= \bigg\| \int_0^1 \NN' (v + \al \Psi) \Psi d\al \bigg\|_{L_{I_0}^1 L_x^2} \\
&\leq C \big( \| v \|_{X (I_0)} + \| \Psi \|_{X (I_0)} \big)^{\frac{4}{d - 2}} \| \Psi \|_{X (I_0)} \\
&\leq C \big( 3 \eta_0 + C \tau^\ta \| \phi \|_{\HS (L^2, L^2)} \big)^{\frac{4}{d - 2}} \tau^\ta \| \phi \|_{\HS (L^2, L^2)} \\
&\leq C \tau^\ta \| \phi \|_{\HS (L^2, L^2)}
\end{align*}

\noi
for $\eta_0, \tau > 0$ sufficiently small. Given $\eps > 0$, we can further shrink the value of $\tau = \tau(\eps) > 0$ so that
\begin{align*}
\| f \|_{L_{I_0}^1 L_x^2} \leq \eps.
\end{align*}

\noi
Thus, by the perturbation lemma (Lemma \ref{LEM:pert}), for $0 < \eps < \eps_0$ with $\eps_0 = \eps_0 (R) > 0$ given by Lemma \ref{LEM:pert}, we obtain
\begin{align*}
\| (v - w, \dt v - \dt w ) \|_{L_{I_0}^\infty \dot{\H}_x^1} + \| v - w \|_{L_{I_0}^q L_x^r} \leq C_0 (R) \eps
\end{align*}

\noi
for some constant $C_0 (R) \geq 1$. In particular, we have
\begin{align}
\big\| \big( v(t_1) - w(t_1), \dt v (t_1) - \dt w (t_1) \big) \big\|_{\dot{\H}_x^1} \leq C_0 (R) \eps.
\label{H1_bdd}
\end{align}

If $I_0 = [t_0, t_0 + \tau]$, then we can stop the iterative argument to be performed below. If $I_0 \subsetneq [t_0, t_0 + \tau]$, then we move on to the second interval $I_1$, and we can also assume that $I_1 \subseteq [t_0, t_0 + \tau]$. By \eqref{Sw}, the Strichartz estimate (Lemma \ref{LEM:str}), and \eqref{H1_bdd}, we have
\begin{align*}
\big\| &V(t - t_1) \big( v(t_1), \dt v (t_1) \big) \big\|_{X (I_1)} \\
&\leq \big\| V(t - t_1) \big( w(t_1), \dt w (t_1) \big) \big\|_{X (I_1)} \\
&\quad + \big\| V(t - t_1) \big( w(t_1) - v(t_1), \dt w (t_1) - \dt v (t_1) \big) \big\|_{X (I_1)} \\
&\leq 2 \eta + C \cdot C_0 (R) \eps \\
&\leq 3 \eta,
\end{align*}

\noi
where we choose $\eps = \eps (R, \eta) > 0$ to be sufficiently small such that $C \cdot C_0 (R) \eps \leq \eta$. By using the above argument on $I_0$, we can obtain the existence of $v$ on $I_1$ and
\begin{align*}
\| v \|_{X (I_1)} \leq 9 \eta
\end{align*}

\noi
given $0 < 3 \eta \leq \eta_0$, and
\begin{align}
\begin{split}
\| f \|_{L_{I_1}^1 L_x^2} &\leq C \big( \| v \|_{X (I_0)} + \| \Psi \|_{X (I_0)} \big)^{\frac{4}{d - 2}} \| \Psi \|_{X (I_0)} \\
&\leq C \big( 3 \eta_0 + C \tau^\ta \| \phi \|_{\HS (L^2, L^2)} \big)^{\frac{4}{d - 2}} \tau^\ta \| \phi \|_{HS (L^2, L^2)} \\
&\leq C \tau^\ta \| \phi \|_{\HS (L^2, L^2)} \\
&\leq \eps.
\end{split}
\label{f_bdd}
\end{align}

\noi
Thus, with \eqref{H1_bdd} and \eqref{f_bdd}, by the perturbation lemma (Lemma \ref{LEM:pert}), we obtain
\begin{align*}
\| (v - w, \dt v - \dt w ) \|_{L_{I_1}^\infty \dot{\H}_x^1} + \| v - w \|_{L_{I_1}^q L_x^r} \leq C_0 (R) \cdot C_0 (R) \eps = C_0 (R)^2 \eps
\end{align*}

\noi
as long as $0 < C_0 (R) \eps < \eps_0$.

Proceeding as above iteratively, we obtain the existence of $v$ on $I_j$ for all $0 \leq j \leq J' \leq J - 1$ such that $\bigcup_{j = 0}^{J'} I_j = [t_0, t_0 + \tau]$, as long as $0 < 3\eta \leq \eta_0$ and $\eps > 0$ satisfies $C \cdot C_0 (R)^{J - 1} \eps \leq \eta \leq \frac{\eta_0}{3}$ and $C_0(R)^{J - 1} \eps < \eps_0$. For convenience, we let $\eta = \frac{\eta_0}{3}$ so that $J$ depends only on $R$ and $\eta_0$, and so the above restrictions on $\eps$ can be made possible by choosing $\eps = \eps (R, \eta_0)$ sufficiently small. This finishes the proof of Theorem \ref{THM:R}.
\end{proof}

\subsection{Global well-posedness on $\T^{d}$}
\label{SUBSEC:T}
In this subsection, we focus on global well-posedness of the defocusing energy-critical SNLW \eqref{SNLW} on the periodic setting and present the proof of Theorem \ref{THM:T}. We would like to invoke the finite speed of propagation of the wave equation to reduce our global well-posedness problem on $\T^d$ to global well-posedness on $\R^d$, which we have presented in Subsection \ref{SUBSEC:GWP_R}. However, instead of using Ito's lemma to establish an energy bound for the solution $u$ to \eqref{SNLW}, we use a Gronwall-type argument to show an a priori energy bound for the solution $v$ to the perturbed NLW \eqref{SNLWv1}.

Let us first show the a priori energy bound in a slightly more abstract setting. Consider the following perturbed equation for $v$ on $\R^d$:
\begin{align}
\begin{cases}
\dt^2 v - \Dl v + \NN (v + z) = 0 \\
(v, \dt v)|_{t = 0} = (u_0, u_1),
\end{cases}
\label{SNLMv2}
\end{align}

\noi
where $\NN (u) = |u|^{\frac{4}{d - 2}} u$ and $z$ is a space-time function satisfying some regularity assumptions to be specified below.

\begin{proposition}
\label{PROP:energy2}
Let $d \geq 3$, $(u_0, u_1) \in \H^1 (\R^d)$, and $T_0 > 0$. Let $z$ be a space-time function in the class 
\begin{align}
L^q ([0, T_0]; W^{\s, r_1} (\R^d)) \cap C\big( [0, T_0]; W^{\s - \eps, r_2} (\R^d) \big) \cap C^1( [0, T_0]; W^{\s - \eps, \infty} (\R^d)),
\label{spaces}
\end{align}
where $1 \leq q < \infty$, $2 \leq r_1 < \infty$, $2 \leq r_2 \leq \infty$, $\eps > 0$, and $\s \in \R$ satisfying
\begin{align*}
\textup{(i)}~ d = 3: ~\s > \frac 12; \qquad \textup{(ii)}~ d = 4: ~ \s > 0; \qquad \textup{(iii)}~ d \geq 5: ~ \s \geq 0.
\end{align*}

\noi
Let $v$ be the solution to the equation \eqref{SNLMv2} with $(v, \dt v)|_{t = 0} = (u_0, u_1)$ and let $T^* = T^*_\o (u_0, u_1)$ be the forward maximal time of existence. Then, there exists a constant
$$C = C\big( T_0, \| (u_0, u_1) \|_{\H^1}, \| z \|_* \big) > 0$$ such that for any stopping time $T$ with $0 < T < \min (T^*, T_0)$ almost surely, we have
\begin{align*}
\sup_{0 \leq t \leq T} E (\vec v (t)) \leq C.
\end{align*}

\noi
Here, $\| \cdot \|_*$ refers to any norm associated with the function spaces in \eqref{spaces}.
\end{proposition}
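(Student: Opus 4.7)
The plan is a Gronwall-type energy estimate in the spirit of Burq-Tzvetkov (2014) and Oh-Pocovnicu (2016), closing on the energy $E(\vec v)$ along the flow of \eqref{SNLMv2}. Since $\vec v$ is merely at the energy level, I would first justify by a short smooth-approximation argument that $E(\vec v(t))$ is absolutely continuous; formally pairing \eqref{SNLMv2} with $\dt v$ and integrating by parts in space gives
\begin{equation*}
\frac{d}{dt} E(\vec v(t)) \;=\; -\int_{\R^d} \dt v(t) \cdot \big[\NN(v+z)(t) - \NN(v)(t)\big]\, dx.
\end{equation*}

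The core of the proof is to control the right-hand side in terms of $E(\vec v)$ and spatial norms of $z$. I would use the pointwise bound $|\NN(v+z) - \NN(v)| \les |v|^{4/(d-2)}|z| + |z|^{(d+2)/(d-2)}$, or when $d = 3, 4$ (so that $\tfrac{d+2}{d-2}$ is an integer) an exact polynomial expansion. The pure-$z$ contribution $\int |\dt v|\,|z|^{(d+2)/(d-2)}\,dx$ is handled directly by H\"older and the $L^q_t W^{\sigma, r_1}_x$ hypothesis on $z$, producing a quantity integrable in time. The essential term is the linearized one $\int \dt v \cdot |v|^{4/(d-2)} z\, dx$. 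For $d \geq 5$, H\"older and the critical Sobolev embedding $\dot H^1 \hookrightarrow L^{2d/(d-2)}$ give
\begin{equation*}
\int |\dt v| \, |v|^{4/(d-2)} |z|\, dx \;\les\; \|\dt v\|_{L^2}\, \|v\|_{L^{2d/(d-2)}}^{4/(d-2)}\, \|z\|_{L^{2d/(d-4)}} \;\les\; E(\vec v)^{1/2 + 2/d}\, \|z\|_{L^{2d/(d-4)}},
\end{equation*}
with the $L^{2d/(d-4)}$ norm reached from $W^{\sigma, r_1}$ by Sobolev embedding, which is precisely why the threshold $\sigma \geq 0$ suffices. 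For $d = 3, 4$ the exponent $\tfrac{2d}{d-4}$ is no longer admissible and a direct estimate fails; instead one integrates by parts in time via the identity
\begin{equation*}
|v|^{4/(d-2)} \dt v \;=\; \tfrac{d-2}{d+2}\, \tfrac{d}{dt}\bigl(|v|^{(d+2)/(d-2)} \mathrm{sgn}(v)\bigr),
\end{equation*}
which specializes to $v^4 \dt v = \tfrac15 \dt(v^5)$ and $v^2 \dt v = \tfrac13 \dt(v^3)$. This produces a boundary term bounded by $E(\vec v(t))^{(d+2)/(2d)} \|z\|_{L^{2d/(d-2)}}$, absorbed into $\tfrac12 E(\vec v(t))$ via Young's inequality (the exponent $(d+2)/(2d) < 1$ for $d \geq 3$), plus an interior term involving $\dt z$ controlled by the hypothesis $\dt z \in C_T W^{\sigma - \eps, \infty}_x$.

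Collecting everything and absorbing sublinear powers of $E$ yields an integral inequality of the form
\begin{equation*}
E(\vec v(t)) \;\les\; 1 + E(\vec v(0)) + C(\|z\|_*) + \int_0^t g(s)\,\bigl(1 + E(\vec v(s))\bigr)\, ds,
\end{equation*}
where $g \in L^1([0, T_0])$ depends on appropriate norms of $z$; Gronwall's inequality then closes the estimate on $[0, T]$. The main obstacle is the three-dimensional case: since $\dot H^1(\R^3) \hookrightarrow L^6$ is the strongest Sobolev embedding available for $v$, one is forced to place $z$ in an $L^\infty$-based space (hence the threshold $\sigma > \tfrac12$, which exactly guarantees $W^{\sigma, r_2}(\R^3) \hookrightarrow L^\infty$ for large enough $r_2$), and the time-derivative trick above is indispensable to trade missing smoothness of $v$ for control on $\dt z$. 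Justifying the differentiability of $E$ at the energy-class level is a subsidiary issue, handled by a standard mollification.
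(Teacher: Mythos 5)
Your overall strategy --- a Gronwall estimate on $E(\vec v)$ via the pointwise bound on $\NN(v+z)-\NN(v)$, with a direct H\"older estimate for $d \geq 5$ and integration by parts in time for small $d$ --- is the approach the paper takes, and the $d \geq 5$ case is handled exactly as in the paper. There are, however, two gaps.

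First, you misclassify $d = 4$. The degenerate exponent $2d/(d-4) = \infty$ at $d = 4$ is precisely the $L^\infty$ norm that the hypothesis $\s > 0$ (via $z \in C_T W^{\s-\eps,\infty}_x$) makes available, and the direct estimate $\int |\dt v|\,|v|^2|z|\,dx \leq \|\dt v\|_{L^2}\|v\|_{L^4}^2\|z\|_{L^\infty} \les E(\vec v)\|z\|_{L^\infty}$ closes without any integration by parts; this is how the paper treats $d = 4$. Conversely, if you insist on the IBP-in-time route for $d = 4$, the interior term $\int v^3\,\dt z\,dx$ cannot be closed with the stated hypotheses: $\dt z$ is only assumed in an $L^\infty$-based Sobolev space with no decay, so pairing it against $v^3$ would require $v \in L^3(\R^4)$, which the energy does not control (it only gives $\nabla v \in L^2$ and $v \in L^4$). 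Taken literally, your plan breaks at $d = 4$.

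Second, in the $d = 3$ case the interior term $\int v^5\,\dt z\,dx$ is not simply ``controlled by the hypothesis $\dt z \in C_T W^{\s-\eps,\infty}_x$.'' Pairing $v^5$ against $\dt z \in L^\infty$ requires $\|v^5\|_{L^1(\R^3)} = \|v\|_{L^5}^5$, but the energy controls only $\|\nabla v\|_{L^2}$ and $\|v\|_{L^6}$; on the whole space $\R^3$, $\|v\|_{L^5}$ is not bounded by these. The paper's resolution is a nontrivial redistribution of derivatives: write $\int \jb{\nabla}^{1/2-\eps'}(|v|^4 v)\cdot\jb{\nabla}^{-1/2+\eps'}(\dt z)\,dx$, apply the fractional chain rule (Lemma \ref{LEM:chain}) to bound $\|\jb{\nabla}^{1/2-\eps'}(|v|^4 v)\|$ by $\|v\|_{L^6}^4\|\jb{\nabla}^{1/2-\eps'}v\|_{L^{3/(1-\eps')}}$, and then Gagliardo--Nirenberg (Lemma \ref{LEM:Gag}) to interpolate this last factor back to energy-controlled quantities. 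Your slogan of ``trading missing smoothness of $v$ for control on $\dt z$'' points at the right idea, but the concrete mechanism --- moving exactly $\tfrac12-\eps'$ derivatives via the fractional chain rule and matching the Lebesgue exponents via Gagliardo--Nirenberg --- is the technical core of the $d=3$ case, and it is absent from your outline.
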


\begin{proof}
We consider three cases.

\smallskip \noi
\textbf{Case 1:} $d = 3$.

In this case, by assumption, we have $z \in C([0, T]; W^{\frac 12 + \eps', \infty} (\R^3))$ and $\dt z \in C([0, T]; W^{- \frac 12 + \eps', \infty} (\R^3))$, where $\eps' > 0$ is sufficiently small. By \eqref{Ev1} and Taylor's theorem, we have
\begin{align}
\begin{split}
\dt E (\vec v (t)) &= - \int_{\R^3} \dt v (t) \big( \NN ( v + z ) (t) - \NN (v) (t) \big) dx \\
&= - 5 \int_{\R^3} \dt v (t) \cdot |v (t)|^4 z (t) dx \\
&\quad - 10 \int_{\R^3} \dt v (t) \cdot |v (t) + \ta z (t)|^2 (v (t) + \ta z (t)) z (t)^2 dx \\
&=: I_1 + I_2
\end{split}
\label{Ev3}
\end{align}

\noi
for some $\ta \in (0, 1)$. For $I_2$, by the Cauchy-Schwarz inequality, H\"older's inequality, and Cauchy's inequality, we obtain
\begin{align}
\begin{split}
|I_2| &\les \int_{\R^3} |\dt v (t)| \big( |v (t)|^3 z (t)^2 + z (t)^5 \big) dx \\
&\leq \| \dt v (t) \|_{L^2} \big( \| z (t) \|_{L^\infty}^4 \| v (t) \|_{L^6}^6 + \| z (t) \|_{L^{10}}^{10} \big)^{\frac 12} \\
&\leq C(\| z(t) \|_{L^\infty}, \| z(t) \|_{L^{10}}) \big( 1 + E(\vec v (t)) \big).
\end{split}
\label{Ev4}
\end{align}

We now consider $I_1$. For $0 \leq t_1 \leq t_2 \leq T$, by integration by parts, H\"older's inequalities, and Young's inequalities, we obtain
\begin{align}
\begin{split}
\int_{t_1}^{t_2} I_1 dt' &= - \int_{t_1}^{t_2} \int_{\R^3} \dt \big( |v|^4 v \big) (t') z (t') dx dt' \\
&= - \int_{\R^3} |v (t_2)|^4 v (t_2) z (t_2) dx + \int_{\R^3} |v (t_1)|^4 v (t_1) z (t_1) dx \\
&\quad + \int_{t_1}^{t_2} \int_{\R^3} |v (t')|^4 v (t') \dt z (t') dx dt' \\
&\leq \dl \| v (t_2) \|_{L^6}^6 + C(\dl) \| z (t_2) \|_{L^6}^6 + \| v (t_1) \|_{L^6}^6 + \| z (t_1) \|_{L^6}^6 \\
&\quad + \int_{t_1}^{t_2} \int_{\R^3} |v (t')|^4 v (t') \dt z (t') dx dt'
\end{split}
\label{Ev5}
\end{align}

\noi
for some $0 < \dl < 1$. By duality, H\"older's inequality, Lemma \ref{LEM:chain}, and Lemma \ref{LEM:Gag}, we have
\begin{align}
\begin{split}
\int_{t_1}^{t_2} &\int_{\R^3} |v (t')|^4 v (t') \dt z (t') dx dt' \\
&= \int_{t_1}^{t_2} \int_{\R^3} \jb{\nb}^{\frac 12 - \eps'} \big( |v|^4 v \big) (t') \jb{\nb}^{- \frac 12 + \eps'} (\dt z) (t') dx dt' \\
&\les \int_{t_1}^{t_2} \big\| \jb{\nb}^{\frac 12 - \eps'} \big( |v|^4 v \big) (t') \big\|_{L^{\frac{3}{3 - \eps'}}} \big\| \jb{\nb}^{-\frac 12 + \eps'} (\dt z) (t') \big\|_{L^\infty} dt' \\
&\les \| \dt z \|_{L^\infty_T W_x^{-\frac 12 + \eps', \infty}} \int_{t_1}^{t_2} \| v(t') \|_{L^6}^4 \big\| \jb{\nb}^{\frac 12 - \eps'} v (t') \big\|_{L^{\frac{3}{1 - \eps'}}} dt' \\
&\leq \| \dt z \|_{L^\infty_T W_x^{-\frac 12 + \eps', \infty}} \int_{t_1}^{t_2} E(\vec v (t'))^{\frac 23} \| \jb{\nb} v (t') \|_{L^2}^{\frac 12} \| v (t') \|_{L^6}^{\frac 12} dt' \\
&\leq \| \dt z \|_{L^\infty_T W_x^{-\frac 12 + \eps', \infty}} \bigg( 1 + \int_{t_1}^{t_2} E(\vec v (t')) dt' \bigg).
\end{split}
\label{Ev6}
\end{align}

Combining \eqref{Ev3}, \eqref{Ev4}, \eqref{Ev5}, and \eqref{Ev6}, we obtain
\begin{align*}
E(\vec v (t_2)) &\leq C (\| z \|_*) \int_{t_1}^{t_2} E(\vec v (t')) dt' + C ( \| z \|_*, \| v(t_1) \|_{L^6} )
\end{align*}

\noi
for any $0 \leq t_1 \leq t_2 \leq T$. By Gronwall's inequality and Sobolev's embedding, we get
\begin{align*}
E (\vec v (t)) \leq C(\| (u_0, u_1) \|_{\H^1}) e^{C(\| z \|_*, \| (u_0, u_1) \|_{\H^1}) t}
\end{align*}

\noi
for any $0 < t \leq T$, which implies the desired energy bound.

\smallskip \noi
\textbf{Case 2:} $d = 4$.

In this case, by assumption, we have $z \in C([0, T]; L^{r_2} (\R^4))$ for any $2 \leq r_2 \leq \infty$. Thus, by~\eqref{SNLMv2}, we have
\begin{align}
\begin{split}
\dt E (\vec v (t)) &= \int_{\R^d} \dt v (t) \big( \dt^2 v (t) - \Dl v (t) + v (t)^3 \big) dx \\
&= - \int_{\R^4} \dt v (t) \big( \NN ( v + z ) (t) - \NN (v) (t) \big) dx.
\end{split}
\label{Ev1}
\end{align}

\noi
By the fundamental theorem of calculus, we have
\begin{align}
\big| \NN ( v + z ) - \NN (v) \big| = \bigg| \int_0^1 \NN' (v + \al z) z d \al \bigg| \les |z| |v|^{2} + |z|^{3}.
\label{Ev2}
\end{align}

\noi
Thus, by \eqref{Ev1}, \eqref{Ev2}, H\"older's inequality, and the Cauchy-Schwarz inequality, we obtain
\begin{align*}
\dt E (\vec v (t)) &\les \| z (t) \|_{L^\infty} \int_{\R^d} |\dt v (t)| |v (t)|^{2} dx + \int_{\R^d} |\dt v (t)| |z (t)|^{3} dx \\
&\leq \| z (t) \|_{L^\infty} \| \dt v (t) \|_{L^2} \| v (t) \|_{L^{4}}^{2} + \| z (t) \|_{L^6}^{3} \| \dt v (t) \|_{L^2} \\
&\leq C ( \| z (t) \|_{L^\infty}, \| z(t) \|_{L^{6}} ) \big( 1 + E (\vec v (t)) \big).
\end{align*}

\noi
By Gronwall's inequality, we get
\begin{align*}
E (\vec v (t)) \leq C(\| (u_0, u_1) \|_{\H^1}) e^{C(\| z \|_*) t}
\end{align*}

\noi
for any $0 < t \leq T$, which implies the desired energy bound.

\smallskip \noi
\textbf{Case 3:} $d \geq 5$.

In this case, by assumption, we have $z \in L^q ([0, T]; L^{r_1} (\R^d))$ for $1 \leq q < \infty$ and $2 \leq r_1 < \infty$. Proceeding as in Case 2 using the fundamental theorem of calculus, we obtain
\begin{align}
\begin{split}
\dt E (\vec v (t)) &\les \int_{\R^d} |\dt v (t)| |v (t)|^{\frac{4}{d - 2}} |z (t)| dx + \int_{\R^d} |\dt v (t)| |z (t)|^{\frac{d + 2}{d - 2}} dx \\
&=: I_3 + I_4.
\end{split}
\label{E3-1}
\end{align}

\noi
Let $0 \leq t_1 \leq t_2 \leq T$. For $I_4$, by the Cauchy-Schwarz inequalities, we obtain
\begin{align}
\int_{t_1}^{t_2} I_4 dt' \leq \| z \|_{L^2_T L_x^{\frac{2d + 4}{d - 2}}}^{\frac{d + 2}{d - 2}} \bigg( 1 + \int_{t_1}^{t_2} E (\vec v (t')) dt' \bigg).
\label{E3-2}
\end{align}

\noi
For $I_1$, by H\"older's inequalities, we obtain
\begin{align}
\begin{split}
\int_{t_1}^{t_2} I_3 dt' &\leq \int_{t_1}^{t_2} \| \dt v (t') \|_{L^2} \| v (t') \|_{L^{\frac{2d}{d - 2}}}^{\frac{4}{d - 2}} \| z (t') \|_{L^{\frac{2d}{d - 4}}} dt' \\
&\les \int_{t_1}^{t_2} E (\vec v (t'))^{\frac{d + 4}{2d}} \| z (t') \|_{L^{\frac{2d}{d - 4}}} dt' \\
&\les \| z \|_{L_T^{\frac{2d}{d - 4}} L_x^{\frac{2d}{d - 4}}} \bigg( 1 + \int_{t_1}^{t_2} E (\vec v (t')) dt' \bigg).
\end{split}
\label{E3-3}
\end{align}

\noi
Thus, combining \eqref{E3-1}, \eqref{E3-2}, \eqref{E3-3} and using Gronwall's inequality, we obtain
\begin{align*}
E (\vec v (t)) \leq C(\| (u_0, u_1) \|_{\H^1}) e^{C(\| z \|_*, \| (u_0, u_1) \|_{\H^1}) t}
\end{align*}

\noi
for any $0 < t \leq T$, which implies the desired energy bound.
\end{proof}

\begin{remark} \label{RMK:dtE} \rm
(i) To make the computations in the above proof more rigorous, we need to work with smooth solutions $(v_N, \dt v_N)$ to \eqref{SNLMv2} with truncated initial data $(P_{\leq N} u_0, P_{\leq N} u_1)$ and truncated perturbation term $P_{\leq N} z$, where $P_{\leq N}$ denotes the sharp frequency projection onto $\{ |n| \leq N \}$. Using similar arguments as in Lemma \ref{LEM:uN}, we can show that $(v_N, \dt v_N)$ converges to $(v, \dt v)$ in $C([0, T]; \dot{\H}^1 (\R^d))$. After establishing an upper bound for $E (v_N , \dt v_N )$ that is independent of $N$, we can take $N \to \infty$ to obtain the desired energy bound for $E (\vec v)$. 

\smallskip \noi
\text{(ii)} In the case when $d = 4$ and $\s = 0$, the argument in Case 3 in the above proof breaks down. Due to the lack of an $L^\infty$-bound in the endpoint case $\s = 0$, we have to rely on the fact that $\frac{8}{d - 2} < \frac{2d}{d - 2}$ when dealing with $I_3$ term in the above proof. However, when $d = 4$, the inequality $\frac{8}{d - 2} < \frac{2d}{d - 2}$ does not hold and we do not know how to overcome this issue at this point. It seems that a more intricate analysis is needed to establish an energy bound for the endpoint case $\s = 0$ for $d = 4$.
\end{remark}

We are now ready to show Theorem \ref{THM:T} by reducing it to the $\R^d$ case. To achieve this, we adjust the use of the finite speed of propagation as in \cite[Appendix A]{OP17} to our stochastic setting.
\begin{proof}[Proof of Theorem \ref{THM:T}]
Let $T > 0$ be a target time and $N \in \N$. Given initial data $(u_0, u_1) \in \H^1 (\T^d)$, we define
\begin{align*}
u_{j, T} (x) &:= \eta_T (x) \sum_{n \in \Z^d} \ft{u_j} (n) e^{i n \cdot x}, \quad x \in \R^d \\
u_{j, N, T} (x) &:= \eta_T (x) \sum_{|n| \leq N} \ft{u_j} (n) e^{i n \cdot x}, \quad x \in \R^d
\end{align*}

\noi
for $j = 0, 1$, where $\eta_T$ is a smooth cutoff function on $[-2 \pi T, 2 \pi T]$ as defined in \eqref{etaR}. 



We consider the following equation for $\mathbf{u}_{N}$ on $\R^d$:
\begin{align}
\begin{cases}
\dt^2 \mathbf{u}_{N, T} - \Delta \mathbf{u}_{N, T} + \NN (\mathbf{u}_{N, T}) = \eta_T P_{\leq N} (\phi \xi) \\
(\mathbf{u}_{N, T}, \dt \mathbf{u}_{N, T})|_{t = 0} = (u_{0, N, T}, u_{1, N, T}),
\end{cases}
\label{SNLW_NT}
\end{align}

\noi
where $P_{\leq N}$ denotes the sharp frequency cutoff onto $\{|n| \leq N\}$ and $\NN (u) = |u|^{\frac{4}{d - 2}} u$.
Since the initial data and the noise are smooth, there exists a unique (smooth) global solution $\mathbf{u}_{N, T}$ to \eqref{SNLW_NT} on $\R^d$. By the finite speed of propagation, we have that $u_N := \mathbf{u}_{N}|_{[0, T] \times \T^d}$ is a solution to the following equation on $\T^d$:
\begin{align*}
\begin{cases}
\dt^2 u_N - \Delta u_N + \NN (u_N) = P_{\leq N} (\phi \xi) \\
(u_N, \dt u_N)|_{t = 0} = (P_{\leq N} u_0, P_{\leq N} u_1).
\end{cases}
\end{align*}

We recall the definition of $\mathbf{\Phi}_{T}$ in \eqref{defPhiR} and define $\mathbf{\Phi}_{N, T}$ as $\mathbf{\Phi}_{T}$ with the summation restricted to $|n| \leq N$. By Lemma \ref{LEM:Phi}, it is not hard to see that $\mathbf{\Phi}_{N, T}$ converges to $\mathbf{\Phi}_T$ in $C([0, T]; W^{s + 1 - \eps, \infty} (\R^d))$ given $\phi \in \HS (L^2, H^s)$. Note that $\mathbf{\Phi}_{N, T}$ and $\mathbf{\Phi}_T$ satisfy
\begin{align*}
\dt^2 \mathbf{\Phi}_{N, T} - \Delta \mathbf{\Phi}_{N, T} = \eta_T P_{\leq N} (\phi \xi) \quad \text{and} \quad
\dt^2 \mathbf{\Phi}_T - \Delta \mathbf{\Phi}_T = \eta_T (\phi \xi),
\end{align*}

\noi
respectively, both with zero initial data.
We also recall that $\Phi$ is the stochastic convolution as defined in \eqref{defPhi}
and we let $\Phi_{N}$ be the truncation of $\Phi$ onto frequencies $\{|n| \leq N\}$. By the finite speed of propagation for the linear solutions, we have
\begin{align*}
\Phi_N = \mathbf{\Phi}_{N, T}|_{[0, T] \times \T^d} \quad \text{and} \quad \Phi = \mathbf{\Phi}_T |_{[0, T] \times \T^d}.
\end{align*}

We now define $\mathbf{v}_{N, T} = \mathbf{u}_{N, T} - \mathbf{\Phi}_{N, T}$, which is the smooth global solution to the following perturbed NLW on $\R^d$:
\begin{align*}
\begin{cases}
\dt^2 \mathbf{v}_{N, T} - \Delta \mathbf{v}_{N, T} + \NN (\mathbf{v}_{N, T} + \mathbf{\Phi}_{N, T}) = 0 \\
(\mathbf{v}_{N, T}, \dt \mathbf{v}_{N, T})|_{t = 0} = (u_{0, N, T}, u_{1, N, T}).
\end{cases}
\end{align*}

\noi
In particular, $v_N := \mathbf{v}_{N, T}|_{[0, T] \times \T^d}$ is the smooth global solution to the following perturbed NLW on $\T^d$:
\begin{align*}
\begin{cases}
\dt^2 v_N - \Delta v_N + \NN (v_N + \Phi_N) = 0 \\
(v_N, \dt v_N)|_{t = 0} = (P_{\leq N} u_0, P_{\leq N} u_1).
\end{cases}
\end{align*}

We also consider the following equation for $\mathbf{v}_T$ on $\R^d$:
\begin{align}
\begin{cases}
\dt^2 \mathbf{v}_T - \Delta \mathbf{v}_T + \NN (\mathbf{v}_T + \mathbf{\Phi}_T) = 0 \\
(\mathbf{v}_T, \dt \mathbf{v}_T)|_{t = 0} = (u_{0, T}, u_{1, T}).
\end{cases}
\label{NLWvT}
\end{align}

\noi
It is easy to see that $(u_{0, T}, u_{1, T}) \in \H^1 (\R^d)$ given $(u_0, u_1) \in \H^1 (\T^d)$. In addition, given the range of $s$ provided in the statement of Theorem \ref{THM:T}, we obtain by Lemma \ref{LEM:Phi} that
\begin{align*}
\mathbf{\Phi}_T \in L^q ([0, T]; W^{\s, r_1} (\R^d)) \cap C\big( [0, T]; W^{\s - \eps, r_2} (\R^d) \big) \cap C^1( [0, T]; W^{\s - \eps, \infty} (\R^d)),
\end{align*}

\noi
where $1 \leq q < \infty$, $2 \leq r_1 < \infty$, $2 \leq r_2 \leq \infty$, $\eps > 0$, and $\s \in \R$ satisfying
\begin{align*}
\textup{(i)}~ d = 3: ~\s > \frac 12; \qquad \textup{(ii)}~ d = 4: ~ \s > 0; \qquad \textup{(iii)}~ d \geq 5: ~ \s \geq 0.
\end{align*}

\noi
Thus, by Proposition \ref{PROP:energy2}, we have an a priori energy bound for $\vec{\mathbf{v}}_T$, so that repeating the proof of Theorem \ref{THM:R} yields global well-posedness of the equation \eqref{NLWvT} (note that we do not need $H^1(\R^d)$-regularity of $\mathbf{\Phi}_T$).

By using a similar argument as in Lemma \ref{LEM:uN}, we can show that
\begin{align*}
\| \vec{\mathbf{v}}_{T} - \vec{\mathbf{v}}_{N, T} \|_{L_T^\infty \dot{\H}^1_x (\R^d)} \too 0
\end{align*}

\noi
and
\begin{align*}
\| \mathbf{v}_T - \mathbf{v}_{N, T} \|_{X([0, T] \times \R^d)} \too 0
\end{align*}

\noi
as $N \to \infty$, where we recall that the $X(I \times \R^d)$-norm is as defined in \eqref{defX}. See also \cite[Appendix A]{OP17} for the precise steps for obtaining the convergence of $\vec{\mathbf{v}}_{N, T}$ to $\vec{\mathbf{v}}_{T}$. This in particular implies that $v_N = \mathbf{v}_{N, T}|_{[0, T] \times \T^d}$ converges to $v := \mathbf{v}_T |_{[0, T] \times \T^d}$ in $C([0, T]; \dot{H}^1 (\T^d) ) \cap X ([0, T] \times \T^d)$ and that $\dt v_N$ converges to $\dt v$ in $C([0, T]; L^2 (\T^d))$. Since $\mathbf{v}_T$ satisfies \eqref{NLWvT} and $\Phi = \mathbf{\Phi}_T |_{[0, T] \times \T^d}$, we have that $v$ satisfies the perturbed NLW on $\T^d$:
\begin{align*}
\begin{cases}
\dt^2 v - \Delta v + \NN (v + \Phi) = 0 \\
(v, \dt v)|_{t = 0} = (u_0, u_1),
\end{cases}
\end{align*}

\noi
and that $v$ satisfies the following Duhamel formulation:
\begin{align}
v(t) = V_{\T^d} (t) (u_0, u_1) - \int_0^t S_{\T^d} (t - t') \big( \NN (v + \Phi) (t') \big) dt',
\label{Duhv}
\end{align}

\noi
where $V_{\T^d}$ and $S_{\T^d}$ has the same forms as $V$ and $S$ in \eqref{defS} but on the periodic domain $\T^d$. From \eqref{Duhv}, we can easily see that $v \in C([0, T]; L^2 (\T^d))$. Thus, we deduce that $u := \Phi + v$ is a solution to \eqref{SNLW} on $\T^d$ in the class
\begin{align*}
\Phi + C([0, T]; \H^1 (\T^d)) \subset C([0, T]; \H^{s + 1 - \eps} (\T^d)),
\end{align*}

\noi
where the inclusion follows from Lemma \ref{LEM:Phi}.

\end{proof}

\begin{ackno}\rm
The authors would like to thank Professor Tadahiro Oh for suggesting this problem and for his support and advice throughout the entire project. E.B. was supported by Unit\'e de Math\'ematiques Pures et Appliqu\'ees UMR 5669 ENS de Lyon / CNRS. 
G.L. was supported by the EPSRC New Investigator Award (grant no. EP/S033157/1). 
and the European Research Council (grant no. 864138 ``SingStochDispDyn'').
R.L. was supported by the European Research Council (grant no. 864138 ``SingStochDispDyn'').
\end{ackno}

\end{document}